\newtheorem{theorem}{Theorem}[section]
\newtheorem{corollary}{Corollary}[section]
\newtheorem{lemma}{Lemma}[section]
\newtheorem{definition}{Definition}[section]
\newtheorem{assumption}{Assumption}[section]
\newcommand{\dif}{\mathrm{d}}
\newcommand{\N}{\mathbb{N}}
\newcommand{\R}{\mathbb{R}}
\newcommand{\grad}{\nabla}
\newcommand{\ep}{\varepsilon}
\newcommand{\pa}{\partial}
\newcommand{\ov}{\overline}
\begin{document}
\setlength{\parskip}{1mm}
\setlength{\oddsidemargin}{0.1in}
\setlength{\evensidemargin}{0.1in}
\lhead{}
\rhead{}

{\bf \Large \noindent The interior transmission eigenvalue problem for an inhomogeneous media with a conductive boundary}

\begin{center}
O. Bondarenko\\
Department of Mathematics\\
Karlsruhe Institute of Technology\\
76131 Karlsruhe, Germany\\
\vspace{0.2in}
Isaac Harris\\
Department of Mathematics\\
Texas A $\&$ M University \\
College Station, Texas 77843-3368\\
E-mail: iharris@math.tamu.edu\\
\vspace{0.2in}
A. Kleefeld\\
Institute for Applied Mathematics and Scientific Computing\\
Brandenburg University of Technology\\
03046 Cottbus, Germany
\end{center}

\begin{abstract}
\noindent In this paper, we investigate the interior transmission eigenvalue problem for an inhomogeneous media with  conductive boundary conditions. We prove the discreteness and existence of the transmission eigenvalues. We also investigate the inverse spectral problem of gaining information about the material properties from the transmission eigenvalues. In particular, we prove that the first transmission eigenvalue is a monotonic function of the refractive index $n$ and boundary conductivity parameter $\eta$, and obtain a uniqueness result for constant coefficients. We provide some numerical examples to demonstrate the theoretical results in three dimensions. 
\end{abstract}

\noindent {\bf Keywords}: inverse scattering, inhomogeneous medium, transmission eigenvalues, inverse spectral problem, conductive boundary condition.

\section{Introduction}

The interior transmission eigenvalue problems have become an important area of research in 
inverse scattering theory. It has been shown that the transmission eigenvalues 
can be determined from the measured scattering data (see e.g. \cite{cchlsm}, \cite{cavityhcs}, \cite{armin}, and \cite{monkyang}). Furthermore, with the knowledge of the transmission eigenvalues it is possible to retrieve information about the material
properties of the scattering object. For example, \cite{cgtev} and \cite{gpinvtev} show that constant and piecewise constant refractive indices, respectively, can be reconstructed with the knowledge of the transmission eigenvalues.
In \cite{cavitycch},  \cite{cavitych}, and \cite{cavityhcs} the transmission eigenvalues are used to detect 
cavities (that is, the subregions in the scatterer where the contrast is zero). This suggests that the transmission eigenvalues can have practical applications, for example, in non-destructive testing.
%
%
%
%
%

In this paper, we study the interior transmission eigenvalue problem associated with the following 
scattering problem: Let $D \subset \R^m$, $m\in \{2,3\}$, be a collection of bounded simply connected domains with piecewise smooth boundary $\pa D$, and let $n$ denote the refractive index, $\nu$ the outward unit normal to the boundary $\pa D$,
$k> 0$ the wave number and $\eta$  a boundary parameter. The total field $u(x) = \mathrm{e}^{\mathrm{i} k {x} \cdot d}+ u^s(x)$ for $x \in \R^m$ and the incidence direction $d \in \mathbb{S} =
\{x \in \R^m: \, |x| = 1\}$ satisfies the following set of equations:
\begin{align}
\Delta u +k^2 u=0  \quad \textrm{ in }  \R^m \setminus \overline{D}  \quad  \text{and} \quad  \Delta u +k^2 nu=0  \quad \textrm{ in } \,  {D},  \label{direct1}\\
 u_+-u_-=0  \quad  \textrm{ on } \pa D \quad  \text{and} \quad  \frac{\partial u_+}{\partial \nu}+ \eta u_+= \frac{\partial u_-}{\partial \nu} \quad \textrm{ on } \partial D, \label{direct3}\\
\lim\limits_{r \rightarrow \infty} r^{\frac{m-1}{2}} \left( \frac{\partial u^s}{\partial r} -\mathrm{i}k u^s \right)=0. \label{src}
\end{align}
The radiation condition \eqref{src} is satisfied uniformly with respect to the direction $\hat{x}=x/|x|$. For the case where $\eta$ is (positive) purely imaginary the above problem represents a model for scattering by an inhomogeneous 
medium covered with a thin and highly conductive layer. We refer to \cite{fmconductbc}, where the authors studied 
the well-posedness of the direct problem and the inverse problem of reconstruction of the domain $D$ via the 
factorization method.
%

We assume that $D$ is given. The interior transmission eigenvalue problem corresponding to \eqref{direct1}--\eqref{src} is to determine the values of $k>0$ such that there exists a nontrivial solution to
\begin{align}
\Delta w +k^2 n w=0 \quad \text{and} \quad \Delta v + k^2 v=0  \quad &\textrm{ in } \,  D \label{teprob1} \\
 w-v=0  \quad  \text{and} \quad  \frac{\partial w}{\partial \nu}-\frac{\partial v}{\partial \nu}= \eta v \quad &\textrm{ on } \partial D.  \label{teprob2} 
\end{align} 
We will call such values of $k$ the \textit{interior transmission eigenvalues}. In this work we will consider the case where
$\eta$ is real valued and positive. We will show that this case can be treated in a similar way as the transmission eigenvalue problems considered in \cite{chtevexist} and \cite{monkyang}. In Section 2, we define 
the interior transmission eigenvalue problem in the appropriate Sobolev spaces. We then show in Section 3 that the eigenvalues form 
at most a discrete set with infinity as the only accumulation point and in Section 4 we show the existence. Using a version of the Courant-Fischer min-max principle we will obtain monotonicity results for the transmission eigenvalues with respect to the material parameters $n$ and $\eta$ in Section 5. Finally, in Section 6 we provide some numerical examples to demonstrate the theoretical results using a boundary integral formulation to compute the transmission eigenvalues. A short summary concludes this article.

\section{Problem definition and variational formulation}
\noindent Let $D \subset \R^m$, $m\in \{2,3\}$, represent a collection of bounded simply connected 
domains. 
We define the Sobolev space  
$$H^1_0(D)=\big\{ u\in L^2(D):\, | \grad u | \in L^2(D)\, \text{ and } \, u=0 \, \text{ on } \, \partial D \big\}$$
and 
$$\tilde{H}^2_0(D) = \big\{u\in H^2(D):\, u\in H^2(D)\cap H_0^1(D) \big\}.$$
By construction, the space $\tilde{H}^2_0(D)$ is a subspace 
of $H^2(D)$ equipped with the $H^2(D)$ norm defined as
\begin{align}
\|u\|_{H^2(D)} = \sum_{|\alpha| \leq 2} \|D^\alpha u\|_{L^2(D)},
\end{align}
$\alpha := (\alpha_1, \dots, \alpha_m),\, \,  \alpha_j \in \N_0, \, \, j = 1, \dots, m, \, \, |\alpha| = 
\alpha_1 + \dots + \alpha_m$. \\ \\

The interior transmission eigenvalue problem reads as follows: 
for given functions $n\in L^\infty(D)$ and $\eta \in L^\infty(\pa D)$ 
find $k>0$ and nontrivial $(w,v)\in L^2(D)\times L^2(D)$ such that 
$w-v \in \tilde{H}^2_0(D)$
and $(w,v)$ satisfies
\begin{align}
\label{eig_helm_1} \Delta w + k^2 n w &= 0 \text{ in } D, \\
\label{eig_helm_2}\Delta v + k^2 v &= 0 \text{ in } D, \\
\label{eig_bc1}\frac{\pa w}{\pa \nu} - \frac{\pa v}{\pa \nu} &= \eta v \text{ on } \pa D, \\
\label{eig_bc2} w - v &= 0 \text{ on } \pa D.
\end{align}
For analytical considerations we put the following assumptions on $n$, $\eta$, and $\pa D$.
\begin{assumption} \quad
\begin{enumerate}
\item The boundary $\partial D$ is of class $\mathcal{C}^2$.
\item $n$ is real-valued. It holds either $0<n_{\text{min}}\leq n<1$ or $n>1$ a.e. in $D$.
\item $\eta \in L^{\infty}(\partial D)$ is real-valued such that  $\eta>0$ a.e. on $\pa D$. 
\end{enumerate}
\end{assumption}
%

The pair $(w,v)\in L^2(D)\times L^2(D)$ is assumed to satisfy (\ref{eig_helm_1})--(\ref{eig_helm_2})
in the distributional sense. We now let $u \in \tilde{H}^2_0(D)$ denote the difference $w-v$. Therefore $u$ satisfies 
\begin{align}\label{eig_eq_v}
\Delta u + k^2 n u = - k^2(n-1)v \, \text{ in } D
\end{align}
or 
\begin{align}\label{eig_diff_helm}
(\Delta + k^2) \frac{1}{n-1}(\Delta u + k^2 n u) = 0 \, \text{ in } D
\end{align}
in the distributional sense. 
\\

{Notice that since both $v$ and $\Delta v$ are in $L^2(D)$ we have that the trace of $v$ on the boundary is in $H^{-1/2}(\pa D)$. }
We write the boundary condition (\ref{eig_bc1}) as
\begin{align}
\label{eig_diff_bc} \frac{1}{\eta}\frac{\pa u}{\pa \nu} = - \frac{1}{k^2(n-1)}(\Delta + k^2 n)u \,\, \,  \text{ on } \pa D.
\end{align}
Since {$\frac{1}{\eta}\frac{\pa u}{\pa \nu} \in L^2(\pa D) \subset  H^{-1/2}(\pa D)$}, the equality (\ref{eig_diff_bc}) is 
understood in {the ${H}^{-1/2}(\pa D)$} sense.
Combining (\ref{eig_diff_helm}) and (\ref{eig_diff_bc}) we arrive at a
variational formulation of (\ref{eig_helm_1})--(\ref{eig_bc2}) {by appealing to Green's second theorem}, which reads as follows: find {a nontrivial }
$u \in \tilde{H}^2_0(D)$ such that 
\begin{align}
&\Biggl\langle -\frac{1}{k^2(n-1)}(\Delta u + k^2 n u), \frac{\pa \varphi}{\pa \nu} \Biggr\rangle
= \Biggl\langle \frac{1}{\eta}\frac{\pa u}{\pa \nu}, \frac{\pa \varphi}{\pa \nu} \Biggr\rangle \nonumber \\
\label{eig_equiv_form1}
&\qquad \qquad \qquad  = \int\limits_D -\frac{1}{k^2(n-1)}(\Delta u + k^2 n u){\Delta \ov{\varphi} } - \frac{1}{n-1}(\Delta u + k^2 n u)\ov{\varphi} \, \dif x
\end{align}
for all $\varphi \in \tilde{H}^2_0(D)$ {where $\langle \cdot , \cdot \rangle$ is the dual pairing between ${H}^{-1/2}(\pa D)$ and ${H}^{1/2}(\pa D)$}.  Taking into account the regularity of $u$ and $\varphi$, and
multiplying both sides by $k^2$ the identity (\ref{eig_equiv_form1}) becomes:
\begin{align}\label{eig_var_form}
\int\limits_{\pa D} \frac{k^2}{\eta} \frac{\pa u}{\pa \nu} {\frac{\pa \ov{\varphi} }{\pa \nu}} \, \dif s
+ \int\limits_D \frac{1}{n-1}(\Delta u + k^2 n u)({\Delta \ov{\varphi} } + k^2 \ov{\varphi}) \, \dif x = 0, \quad \forall \; \varphi\in \tilde{H}^2_0(D)
\end{align}
The functions $v$ and $w$ are related to $u$ through
\[
v = -\frac{1}{k^2(n-1)}(\Delta u + k^2 n u) 
\quad \text{ and } \quad 
w = -\frac{1}{k^2(n-1)}(\Delta u + k^2 u). 
\]
\begin{definition} Values of $k>0$ for which the interior transmission problem
(\ref{eig_helm_1})--(\ref{eig_bc2}) has a nontrivial 
solution $v\in L^2(D)$ and $w\in L^2(D)$ such that 
$w-v\in \tilde{H}^2_0(D)$ are called transmission eigenvalues. If $k>0$ is a transmission eigenvalue, we call the 
solution $u \in \tilde{H}^2_0(D)$ of (\ref{eig_var_form}) the corresponding eigenfunction. 
\end{definition}
\section{Discreteness of the transmission eigenvalues}
In this section, we will prove that the set of transmission eigenvalues is at most discrete. To this end, we will write the transmission eigenvalue problem as a quadratic eigenvalues problem for $k^2$. 
Notice that from the variational formulation of the transmission eigenvalue problem (\ref{eig_var_form}) we have that the eigenvalue problem can now be written as
\begin{align}\label{eig_T}
\mathbb{T}u +k^2 \mathbb{T}_1 u +k^4 \mathbb{T}_2 u=0,
\end{align}
where the operator $ \mathbb{T}: \tilde{H}^2_0(D) \mapsto \tilde{H}^2_0(D)$ is  the bounded, self-adjoint operator defined by means of the Riesz representation theorem such that
\begin{align}\label{eig_op_T}
(\mathbb{T} u, \varphi )_{H^2(D)}=\int\limits_D \frac{1}{n-1} \Delta u \,  \Delta \overline{\varphi}  \, \dif x \quad \text{ for all } \quad  \varphi \in \tilde{H}^2_0(D). 
\end{align}
By Theorem 8.13 in \cite{Gilbarg} (note $\pa D \in C^2$), there 
exists a constant $C>0$ such that
\[
\|u\|^2_{H^2(D)} \leq C \Big(\|u\|^2_{L^2(D)} + \|\Delta u\|^2_{L^2(D)} \Big) \, \,  \text{ for all } u \in \tilde{H}^2_0(D).
\]
Since the trace of $u$ is zero we have that $\|u\|_{L^2(D)} \leq c \|\Delta u\|^2_{L^2(D)}$. Thus, the operator 
$\mathbb{T}$, for $n-1>0$, (or $-\mathbb{T}$, for $0<n<1$) is coercive on {$\tilde{H}^{2}_0(D)$} and, by the Lax-Milgram Lemma \cite{p1}, has a bounded inverse. Next, we define the operator $\mathbb{T}_1: \tilde{H}^2_0(D) \mapsto \tilde{H}^2_0(D)$ by means of the Riesz representation theorem such that $\text{for all }  \varphi \in \tilde{H}^2_0(D)$
\[
(\mathbb{T}_1 u, \varphi )_{H^2(D)}= - \int\limits_D \frac{1}{n-1} ( \overline{\varphi}  \, \Delta u + u \, \Delta \overline{\varphi}) \, \dif x +  \int\limits_D \grad u \cdot \grad \overline{\varphi} \, \dif x +  \int\limits_{\partial D} \frac{1}{\eta} \frac{\partial u}{\partial \nu} \frac{\partial \overline{\varphi} }{\partial \nu} \, \dif s. 
\]
The operator $\mathbb{T}_1 $ is self-adjoint and also compact.  
Indeed, if we define the auxiliary operator $\mathbb{A} :\tilde{H}^2_0(D) \mapsto \tilde{H}^2_0(D)$ such that 
\[
(\mathbb{A} u, \varphi )_{H^2(D)}= \int\limits_D \frac{1}{n-1} u \, \Delta \overline{\varphi} \, \dif x \quad \text{and} \quad (\mathbb{A}^* u, \varphi )_{H^2(D)}= \int\limits_D \frac{1}{n-1}  \overline{\varphi}  \, \Delta u\, \dif x.
\]
It is easy to see that $\| \mathbb{A}u \|_{H^2(D)}$ is bounded by $\| u \|_{L^2(D)}$. By Rellich's embedding theorem,
this implies that $\mathbb{A}$, and therefore $\mathbb{A}^*$, are compact. The compactness of $\mathbb{T}_1$ follows from the compactness of $\mathbb{A}$ and $\mathbb{A}^*$ along with the fact that $H^{1/2}(\partial D)$ is compactly embedded in $L^2(\partial D)$. 
At last, we define $\mathbb{T}_2:\tilde{H}^2_0(D) \mapsto \tilde{H}^2_0(D)$ by means of the Riesz representation theorem such that 
\[
(\mathbb{T}_2 u, \varphi )_{H^2(D)}=  \int\limits_D \frac{n}{ n-1 } u \, \overline{\varphi} \, \dif x  \quad \text{ for all } \quad  \varphi \in \tilde{H}^2_0(D). 
\]
$\mathbb{T}_2$ is compact and self-adjoint.

We are now ready to prove the discreteness of the set of transmission eigenvalues. 

\begin{theorem}
Assume that $n>1$ or  $0<n<1$  a.e. in $D$ and $\eta >0$ on $\partial D$ then the set of transmission eigenvalues is at most discrete. Moreover, the only accumulation point for the set of transmission eigenvalues is $+ \infty$. 
\end{theorem}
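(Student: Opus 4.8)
The plan is to recast the quadratic pencil \eqref{eig_T} as a holomorphic Fredholm problem and apply the analytic Fredholm theorem. Since $\mathbb{T}$ (or $-\mathbb{T}$) has a bounded inverse, \eqref{eig_T} is equivalent to $G(\lambda)u=0$, where $\lambda:=k^{2}$ and
\[
G(\lambda):=\mathbb{I}+\lambda\,\mathbb{T}^{-1}\mathbb{T}_{1}+\lambda^{2}\,\mathbb{T}^{-1}\mathbb{T}_{2}\,:\,\tilde H^{2}_{0}(D)\to\tilde H^{2}_{0}(D).
\]
This is a polynomial, hence entire, operator-valued map on $\mathbb{C}$, and $G(\lambda)-\mathbb{I}$ is compact for every $\lambda$ because $\mathbb{T}^{-1}$ is bounded while $\mathbb{T}_{1},\mathbb{T}_{2}$ are compact. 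The analytic Fredholm theorem then yields the dichotomy: either $G(\lambda)$ is non-injective for every $\lambda\in\mathbb{C}$, or there is a set $S\subset\mathbb{C}$, discrete and without accumulation point in $\mathbb{C}$, such that $G(\lambda)$ is boundedly invertible for $\lambda\notin S$ and is not invertible on $S$. Since $G(\lambda)$ is Fredholm of index zero, $k>0$ is a transmission eigenvalue precisely when $G(k^{2})$ fails to be injective, i.e.\ when $k^{2}\in S$; thus it suffices to produce one $\lambda_{0}$ for which $G(\lambda_{0})$ is injective, which excludes the first alternative.

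I would take $\lambda_{0}=-\tau$ with $\tau>0$ and test \eqref{eig_var_form} against $\varphi=u$. Using $\Delta u+\lambda n u=(\Delta u+\lambda u)+\lambda(n-1)u$ and the fact that $u\in\tilde H^{2}_{0}(D)$ has zero trace, so that $\int_{D}u\,\overline{\Delta u}\,\dif x=-\|\grad u\|_{L^{2}(D)}^{2}$, the left-hand side of \eqref{eig_var_form} at $k^{2}=\lambda$ equals the quantity (real for real $\lambda$)
\[
A_{\lambda}(u,u)=\int_{D}\frac{1}{n-1}\bigl|\Delta u+\lambda u\bigr|^{2}\dif x-\lambda\|\grad u\|_{L^{2}(D)}^{2}+\lambda^{2}\|u\|_{L^{2}(D)}^{2}+\lambda\int_{\pa D}\frac{1}{\eta}|\pa_{\nu}u|^{2}\dif s ,
\]
which is the sesquilinear form of the pencil, so that $G(\lambda)u=0$ entails $A_{\lambda}(u,u)=0$. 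The same integration by parts yields the elementary identity $\|\Delta u-\tau u\|_{L^{2}(D)}^{2}=\|\Delta u\|_{L^{2}(D)}^{2}+2\tau\|\grad u\|_{L^{2}(D)}^{2}+\tau^{2}\|u\|_{L^{2}(D)}^{2}$, so at $\lambda=-\tau$ the first term of $A_{-\tau}(u,u)$ is bounded below by $(\sup_{D}n-1)^{-1}\bigl(\|\Delta u\|_{L^{2}(D)}^{2}+\tau^{2}\|u\|_{L^{2}(D)}^{2}\bigr)$ when $n>1$, and analogously by $(1-n_{\min})^{-1}\bigl(\|\Delta u\|_{L^{2}(D)}^{2}+\tau^{2}\|u\|_{L^{2}(D)}^{2}\bigr)$ when $0<n<1$.

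The only genuinely delicate point is the conductive boundary term $-\tau\int_{\pa D}\tfrac{1}{\eta}|\pa_{\nu}u|^{2}\dif s$ in $A_{-\tau}(u,u)$. In the case $0<n<1$ this term is nonnegative, so $-A_{-\tau}(u,u)\ge(1-n_{\min})^{-1}\|\Delta u\|_{L^{2}(D)}^{2}$ for every $\tau>0$; since $u\mapsto\|\Delta u\|_{L^{2}(D)}$ is an equivalent norm on $\tilde H^{2}_{0}(D)$ (by the $H^{2}(D)$ estimate of Theorem 8.13 in \cite{Gilbarg} together with the Poincaré-type inequality quoted above), $A_{-\tau}(u,u)=0$ forces $u=0$ and $G(-\tau)$ is injective. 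In the case $n>1$ the boundary term has the unfavorable sign, and I would absorb it using the trace bound $\int_{\pa D}|\pa_{\nu}u|^{2}\dif s\le C_{\pa D}\|u\|_{H^{2}(D)}^{2}$ combined with $\|u\|_{H^{2}(D)}^{2}\le C\bigl(\|u\|_{L^{2}(D)}^{2}+\|\Delta u\|_{L^{2}(D)}^{2}\bigr)\le C'\|\Delta u\|_{L^{2}(D)}^{2}$, which gives $\int_{\pa D}|\pa_{\nu}u|^{2}\dif s\le C''\|\Delta u\|_{L^{2}(D)}^{2}$; then for $\tau$ small enough that $\tau C''/\eta_{\min}<(\sup_{D}n-1)^{-1}$ the favorable volume term still dominates and $A_{-\tau}(u,u)\ge c\|\Delta u\|_{L^{2}(D)}^{2}\ge c'\|u\|_{H^{2}(D)}^{2}$, so again $G(-\tau)$ is injective. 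Hence in both cases the first alternative of the dichotomy is ruled out; this absorption of the boundary term against the coercive volume term is the crux of the argument, the remainder being bookkeeping.

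With the first alternative excluded, the analytic Fredholm theorem furnishes a discrete set $S\subset\mathbb{C}$ with no accumulation point in $\mathbb{C}$ outside of which $G(\lambda)$ is invertible. Consequently the set of transmission eigenvalues is $\{k>0:\ k^{2}\in S\}$. Since $S$ does not accumulate at any finite point (in particular not at $0$) and $k\mapsto k^{2}$ is a homeomorphism of $(0,\infty)$ onto itself, this set is at most countable, has no finite accumulation point, and can therefore accumulate only at $+\infty$, which is the assertion of the theorem.
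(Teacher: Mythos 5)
Your argument is correct, but it takes a genuinely different route from the paper. The paper keeps the quadratic pencil $\mathbb{T}+k^2\mathbb{T}_1+k^4\mathbb{T}_2$ of \eqref{eig_T} and removes the $k^4$ term by a companion-type linearization: introducing $U=\big(u,k^2(\sigma\mathbb{T}_2)^{1/2}u\big)^\top$ it rewrites the problem as $\big(\mathbb{K}-\tfrac{1}{k^2}\mathbb{I}\big)U=0$ with a compact block operator $\mathbb{K}$ on $\tilde{H}^2_0(D)\times\tilde{H}^2_0(D)$, so the transmission eigenvalues are reciprocals of eigenvalues of a single compact operator and discreteness (with finite multiplicities) follows at once from the spectral theory of compact operators, with no need to exhibit any parameter value at which the pencil is injective. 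You instead view $\lambda=k^2\mapsto G(\lambda)=\mathbb{I}+\lambda\,\mathbb{T}^{-1}\mathbb{T}_1+\lambda^2\,\mathbb{T}^{-1}\mathbb{T}_2$ as an entire family of compact perturbations of the identity and invoke the analytic Fredholm theorem; the price is having to exclude the degenerate alternative, which you do correctly by the coercivity estimate at $\lambda=-\tau$, absorbing the boundary term through the trace inequality and $\|u\|^2_{H^2(D)}\le C_1\|\Delta u\|^2_{L^2(D)}$ (and noting the $0<n<1$ case works for every $\tau>0$). This is essentially the same estimate the paper proves later, for small real $k$, in Theorem \ref{positive}, so your route front-loads that work. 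Both routes rest on the same standing hypotheses: $n$ bounded away from $1$ so that $\mathbb{T}$ (or $-\mathbb{T}$) is boundedly invertible, and $\operatorname{ess\,inf}_{\pa D}\eta>0$ so that the $1/\eta$ boundary term is bounded — the paper uses $\eta_{min}>0$ in exactly the same way. What your approach buys is that it avoids the operator square root $(\sigma\mathbb{T}_2)^{1/2}$ and the $2\times 2$ linearization, and it gives discreteness of the non-invertibility set of $G$ in all of $\C$; what the paper's approach buys is brevity at this stage, since no injectivity/coercivity step is needed for discreteness and that estimate is deferred to the existence proof where it is required anyway.
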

\begin{proof}
Let $\sigma=1$ when $n-1\geq \alpha>0$ and $\sigma=-1$ when $1-n \geq \alpha>0$. 
We write (\ref{eig_T}) as 
\[
 u +\sigma k^2 (\sigma \mathbb{T})^{-1} \mathbb{T}_1 u + \sigma k^4 (\sigma\mathbb{T})^{-1} \mathbb{T}_2 u=0
\]
or, equivalently (since $\sigma \mathbb{T}_2$ is a positive self-adjoint operator), as
\begin{align}\label{eig_matrix}
\left(\mathbb{K} -\frac{1}{k^2}\mathbb{I} \right)U =0
\end{align}
with $U = \big(u, k^2(\sigma \mathbb{T}_2)^{1/2}u\big)^\top \in \tilde{H}^2_0(D) \times \tilde{H}^2_0(D)$ and 
\[
\mathbb{K} = \begin{pmatrix} \sigma(\sigma\mathbb{T})^{-1}\mathbb{T}_1  \, \, & &(\sigma\mathbb{T})^{-1} \left(\sigma \mathbb{T}_2 \right)^{1/2}  \\ - \left(\sigma \mathbb{T}_2 \right)^{1/2} \, \,  & & 0 \end{pmatrix} : \tilde{H}^2_0(D) \times \tilde{H}^2_0(D) \mapsto \tilde{H}^2_0(D)\times \tilde{H}^2_0(D).
\]
The square root $\left(\sigma \mathbb{T}_2 \right)^{1/2}$ of the compact self-adjoint operator $\sigma \mathbb{T}_2$  is defined by 
$\displaystyle{ \left(\sigma \mathbb{T}_2 \right)^{1/2}= \int_0^\infty \lambda^{1/2} \dif E_\lambda}$, where $E_\lambda$ is the spectral measure associated 
with $\sigma \mathbb{T}_2$. The operator $\left(\sigma \mathbb{T}_2 \right)^{1/2}$ is compact and self-adjoint. 

We conclude that $\mathbb{K}$ is compact. From (\ref{eig_matrix}) we see that 
the interior eigenvalues $k$ are the inverse of the eigenvalues for the compact-matrix operator 
$\mathbb{K}$. Therefore, the interior eigenvalues form at most a discrete set with $\infty$ as the only accumulation point. Moreover, the eigenspaces for each eigenvalue have finite multiplicity.\end{proof}
\section{Existence of the transmission {eigenvalues} }
We prove the existence of infinitely many transmission eigenvalues using Theorem 2.3 of \cite{chtevexist}.
We recall this key result in the following lemma. 
\begin{lemma}\label{eig_key_lemma}(Theorem 2.3 of \cite{chtevexist})
Let $k \mapsto A_k$ be a continuous mapping from $(0,\infty)$ to the set of self-adjoint  positive definite bounded linear operators on the Hilbert space $U$ and assume that $B$ is a self-adjoint non-negative compact linear operator on $U$. We assume that there exist two positive constants $k_0$ and $k_1$ such that
\begin{enumerate}
\item $A_{k_0}-k_0^2B$ is positive on $U$
\item $A_{k_1}-k_1^2B$ is non-positive on a m dimensional subspace of $U$
\end{enumerate}
then each of the equations $\lambda_j(k)-k^2=0$ for $j=1, \dots, m$ has at least one solution in $[k_0,k_1]$ where $\lambda_j(k)$ is such that $A_{k}-\lambda_j(k) B$ has a non-trivial kernel. 
\end{lemma}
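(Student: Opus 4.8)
The plan is to reduce the generalized eigenvalue problem $A_ku=\lambda Bu$ to a spectral problem for a compact self-adjoint operator and then combine the Courant--Fischer min-max principle with the intermediate value theorem. First I would observe that, since each $A_k$ is self-adjoint, positive definite and bounded, it has a bounded self-adjoint positive definite square root $A_k^{1/2}$ with bounded inverse $A_k^{-1/2}$, and that $k\mapsto A_k^{-1/2}$ is norm continuous on compact subintervals of $(0,\infty)$ (using the continuity of $k\mapsto A_k$ and a uniform lower bound for $A_k$ there). Writing $\psi=A_k^{1/2}u$, the equation $A_ku=\lambda Bu$ is equivalent to $\psi=\lambda\,A_k^{-1/2}BA_k^{-1/2}\psi$, so the reciprocals $1/\lambda$ are exactly the nonzero eigenvalues of the compact, self-adjoint, non-negative operator $T_k:=A_k^{-1/2}BA_k^{-1/2}$. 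Listing the eigenvalues of $T_k$ in nonincreasing order as $\mu_1(k)\ge\mu_2(k)\ge\cdots\ge 0$, the numbers $\lambda_j(k):=1/\mu_j(k)$ form the increasing sequence of generalized eigenvalues, and $k\mapsto\lambda_j(k)$ is continuous wherever $\mu_j(k)>0$, because $k\mapsto T_k$ is norm continuous and eigenvalues of compact self-adjoint operators depend continuously on the operator in the norm topology.

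Next I would record the two min-max identities obtained from Courant--Fischer applied to $T_k$ and transported back via $u=A_k^{-1/2}\psi$:
\[
\mu_j(k)=\max_{\dim W=j}\ \min_{0\neq u\in W}\frac{(Bu,u)}{(A_ku,u)},\qquad \lambda_j(k)=\min_{\dim W=j}\ \max_{0\neq u\in W}\frac{(A_ku,u)}{(Bu,u)},
\]
the maximum in the second formula being taken over $u$ with $(Bu,u)>0$. These carry the hypotheses. By assumption $(1)$, $A_{k_0}-k_0^2B$ positive on $U$ means $(A_{k_0}u,u)>k_0^2(Bu,u)$ for all $u\neq0$; since $T_{k_0}$ is compact and nonzero its top eigenvalue $\mu_1(k_0)$ is attained, so $\mu_1(k_0)<1/k_0^2$ and hence $\lambda_j(k_0)\ge\lambda_1(k_0)>k_0^2$ for $j=1,\dots,m$. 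By assumption $(2)$, let $W_m$ be the $m$-dimensional subspace on which $A_{k_1}-k_1^2B$ is non-positive; positive definiteness of $A_{k_1}$ forces $(Bu,u)>0$ for every nonzero $u\in W_m$, so in particular $\operatorname{rank}B\ge m$, whence $\mu_j(k)>0$ for $j\le m$ and all $k$, and the second min-max identity with $W=W_m$ gives $\lambda_m(k_1)\le k_1^2$, hence $\lambda_j(k_1)\le k_1^2$ for $j=1,\dots,m$.

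Finally, for each fixed $j\in\{1,\dots,m\}$ set $g_j(k):=\lambda_j(k)-k^2$. The previous paragraph gives $g_j(k_0)>0$ and $g_j(k_1)\le 0$, and $g_j$ is continuous on $[k_0,k_1]$ since $\mu_j(k)>0$ throughout. The intermediate value theorem then produces $k\in[k_0,k_1]$ with $g_j(k)=0$, i.e. $\lambda_j(k)=k^2$, and by construction $A_k-\lambda_j(k)B$ has nontrivial kernel, which is the assertion. I expect the only genuine technical point to be the continuity of $k\mapsto\lambda_j(k)$: one must check the norm continuity of $k\mapsto A_k^{-1/2}$ (hence of $T_k$) and make sure $\mu_j$ does not drop to $0$ on $[k_0,k_1]$ — the latter handled by the rank bound $\operatorname{rank}B\ge m$ extracted from hypothesis $(2)$ — after which continuity of eigenvalues of compact self-adjoint operators is standard.
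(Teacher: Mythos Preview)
The paper does not give its own proof of this lemma: it is simply quoted as Theorem~2.3 of \cite{chtevexist} and used as a black box. Your argument is correct and is in fact the standard proof of that result: one conjugates by $A_k^{1/2}$ to convert the pencil $A_k-\lambda B$ into the compact self-adjoint operator $T_k=A_k^{-1/2}BA_k^{-1/2}$, reads off the Courant--Fischer characterization of $\lambda_j(k)=1/\mu_j(k)$, uses hypothesis~(1) to get $\lambda_j(k_0)>k_0^2$, hypothesis~(2) together with positive definiteness of $A_{k_1}$ to get $\lambda_j(k_1)\le k_1^2$ and $\operatorname{rank}B\ge m$, and finishes with the intermediate value theorem. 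The technical points you flag (norm continuity of $k\mapsto A_k^{-1/2}$ on compact intervals via a uniform lower bound on $A_k$, and positivity of $\mu_j(k)$ for $j\le m$ from the rank bound) are exactly the ones that need checking and you have handled them correctly.
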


Recall the variational formulation of the transmission eigenvalue problem (\ref{eig_var_form}):
\begin{align}
\int\limits_{\pa D} \frac{k^2}{\eta} \frac{\pa u}{\pa \nu} {\frac{\pa\ov{ \varphi} }{\pa \nu}} \, \dif s
+ \int\limits_D \frac{1}{n-1}(\Delta u + k^2 n u)({\Delta \ov{\varphi} } + k^2 \ov{\varphi}) \, \dif x = 0, \quad \forall \; \varphi\in \tilde{H}^2_0(D).
\end{align}
We define the following bounded sesquilinear forms on $\tilde{H}^2_0(D)$:
\begin{align}
\hspace*{-1cm} \mathcal{A}_k(u,\varphi) &=  \int\limits_D  \frac{1}{ n-1} ( \Delta u +k^2 u )( \Delta \overline{\varphi} +k^2 \overline{\varphi} ) +k^4 u \overline{\varphi}  \, \dif x + k^2 \int\limits_{\partial D} \frac{1}{\eta} \frac{\partial u}{\partial \nu} \frac{\partial \overline{\varphi}}{\partial \nu} \, \dif s, \label{bad1} \\
\widetilde{\mathcal{A}}_k (u,\varphi) &= \int\limits_D  \frac{n}{ 1-n}( \Delta u +k^2 u ) ( \Delta \overline{\varphi} +k^2 \overline{\varphi} )+\Delta u \Delta \overline{\varphi}  \, \dif x, \label{bad2} \\
\mathcal{B}(u,\varphi) &= \int\limits_D \grad u \cdot \grad \overline{\varphi}  \, \dif x,  \quad \text{and} \\
\widetilde{\mathcal{B}}(u,\varphi) &= \int\limits_D \grad u \cdot \grad \overline{\varphi}  \, \dif x +\int\limits_{\partial D} \frac{1}{\eta} \frac{\partial u}{\partial \nu} \frac{\partial \overline{\varphi}}{\partial \nu} \, \dif s.   \label{bad4}
\end{align}
Now, we write the transmission eigenvalue problem either as
\begin{equation}
\mathcal{A}_k(u,\varphi) -k^2\mathcal{B}(u,\varphi) =0 \quad \text{ for all } \quad  \varphi \in \tilde{H}^2_0(D), \quad \text{where} \, \, \, n>1,
\end{equation}
or as
\begin{equation}
\widetilde{ \mathcal{A}}_k(u,\varphi) -k^2 \widetilde{ \mathcal{B}} (u,\varphi) =0 \quad \text{ for all } \quad  \varphi \in \tilde{H}^2_0(D), \quad \text{where} \, \, \, 0<n<1.
\end{equation}
Using the Riesz representation theorem we can define the bounded linear operators $\mathbb{A}_k$, $\widetilde{\mathbb{A}}_k$, $\mathbb{B}$, and $\widetilde{ \mathbb{B}}: \tilde{H}^2_0(D) \mapsto \tilde{H}^2_0(D)$ such that 
\begin{align*}
\left( {\mathbb{A}}_k u, \varphi \right)_{H^2(D)}={ \mathcal{A}}_k(u,\varphi),  \quad \left( \widetilde{\mathbb{A}}_k u, \varphi \right)_{H^2(D)}=\widetilde{ \mathcal{A}}_k(u,\varphi), 
\end{align*}
\begin{align*}
\left( {\mathbb{B}} u, \varphi \right)_{H^2(D)}={ \mathcal{B}}(u,\varphi)  \quad \text{ and } \quad \left( \widetilde{\mathbb{B}}u, \varphi \right)_{H^2(D)}=\widetilde{ \mathcal{B}}(u,\varphi).
\end{align*}
Since $n$ and $\eta$ are real valued the sesquilinear forms are Hermitian and therefore the operators are self-adjoint. 
Due to compact embeddings of $H^2(D)$ into $H^1(D)$ and $H^{1/2}(\partial D)$ into $L^2(\partial D)$ the 
operators $\mathbb{B}$ and $\tilde{ \mathbb{B}}$ are compact. Also since $\eta > 0$ both operators $\mathbb{B}$ and $\tilde{ \mathbb{B}}$ are positive (note that the trace of $u$ on $\partial D$ is zero). 

For the case when $n>1$ it has been shown in \cite{chtevexist} that   
 \begin{align*}
{ \mathcal{A}}_k(u,u) \geq C ||\Delta u||^2_{L^2(D)}  + k^2  \int\limits_{\partial D} \frac{1}{\eta} \left|\frac{\partial u}{\partial \nu} \right|^2 \, \dif s \geq C ||\Delta u||^2_{L^2(D)}
\end{align*}
where $C>0$ only depends on the refractive index $n$. Also for $\widetilde{\mathcal{A}}_k$, for the case $0<n<1$, we have
%
\begin{align*}
\widetilde{\mathcal{A}}_k(u,u) = \int\limits_D  \frac{n}{ 1-n}| \Delta u +k^2 u |^2 +|\Delta u|^2  \, \dif x \geq  ||\Delta u||^2_{L^2(D)}. 
\end{align*}
Therefore, for both $\mathcal{A}_k$ and $\widetilde{\mathcal{A}}_k$ holds
\[
\mathcal{A}_k(u,u) \geq C\|u\|_{H^2(D)} \quad \text{ and } \quad \widetilde{\mathcal{A}}_k(u,u) \geq c\|u\|_{H^2(D)} 
\]
for all $k\geq 0$, where the constants $C$ and $c$ are positive and independent of $u \in \tilde{H}^2_0(D)$. In the next theorem we summarize the properties of the 
operators $\mathbb{A}_k$, $\widetilde{\mathbb{A}}_k$, $\mathbb{B}$, and $\widetilde{ \mathbb{B}}$.
\begin{theorem}\label{tefredholm}
Assume that either $n>1$ or  $0<n<1$  a.e. in $D$ and that $\eta > 0$ on $\partial D$ then 
\begin{enumerate}
\item the operators $\mathbb{B} $ and $\widetilde{\mathbb{B}}$ are positive, compact, and self-adjoint. 
\item the operator $\mathbb{A}_k$ is a coercive self-adjoint operator provided that $n>1$. 
\item the operator $\widetilde{\mathbb{A}}_k $ is a coercive self-adjoint operator provided that $0<n<1$. 
\end{enumerate}
Therefore, the operators $\mathbb{A}_k -k^2 \mathbb{B}$ and $\widetilde{\mathbb{A}}_k -k^2 \widetilde{\mathbb{B}}$ satisfy the Fredholm property.  
\end{theorem}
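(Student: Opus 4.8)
The plan is to assemble the theorem from the estimates already recorded above, since the bulk of the analysis has been front-loaded. I would first dispose of the self-adjointness asserted in (1)--(3): because $n$ and $\eta$ are real valued, each of the forms $\mathcal{A}_k$, $\widetilde{\mathcal{A}}_k$, $\mathcal{B}$, $\widetilde{\mathcal{B}}$ is Hermitian --- the conjugation symmetry $\mathcal{A}_k(u,\varphi)=\overline{\mathcal{A}_k(\varphi,u)}$ and its analogues are visible term by term in \eqref{bad1}, \eqref{bad2}, and \eqref{bad4} --- so by the uniqueness in the Riesz representation theorem the induced operators $\mathbb{A}_k$, $\widetilde{\mathbb{A}}_k$, $\mathbb{B}$, $\widetilde{\mathbb{B}}$ on $\tilde{H}^2_0(D)$ are self-adjoint, and boundedness follows from continuity of the forms.

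Next I would treat $\mathbb{B}$ and $\widetilde{\mathbb{B}}$. Positivity is read off the diagonal: $(\mathbb{B}u,u)_{H^2(D)}=\|\grad u\|_{L^2(D)}^2\ge 0$ and $(\widetilde{\mathbb{B}}u,u)_{H^2(D)}=\|\grad u\|_{L^2(D)}^2+\int_{\pa D}\frac{1}{\eta}|\pa_\nu u|^2\,\dif s\ge 0$, the latter because $\eta>0$ a.e.\ on $\pa D$. For compactness, note that $\mathcal{B}(\cdot,\cdot)$ is continuous in the $H^1(D)$ topology, so it factors through the Rellich embedding $\tilde{H}^2_0(D)\hookrightarrow H^1(D)$; the usual argument --- take a bounded sequence in $\tilde{H}^2_0(D)$, extract an $H^1$-convergent subsequence, and estimate $\|\mathbb{B}u_n-\mathbb{B}u_m\|_{H^2(D)}$ by testing against $\varphi=\mathbb{B}u_n-\mathbb{B}u_m$ --- gives compactness of $\mathbb{B}$. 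For $\widetilde{\mathbb{B}}$ one additionally uses that the Neumann trace is bounded $H^2(D)\to H^{1/2}(\pa D)$, that $H^{1/2}(\pa D)\hookrightarrow L^2(\pa D)$ compactly, and that $1/\eta\in L^\infty(\pa D)$, to handle the boundary term; thus $\widetilde{\mathbb{B}}$ is compact as well, proving (1).

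For (2) and (3) I would invoke the coercivity bounds displayed just before the theorem: for $n>1$, $\mathcal{A}_k(u,u)\ge C\|\Delta u\|_{L^2(D)}^2$ with $C=C(n)>0$ (borrowed from \cite{chtevexist}), and for $0<n<1$, $\widetilde{\mathcal{A}}_k(u,u)\ge\|\Delta u\|_{L^2(D)}^2$ directly from \eqref{bad2}. Combining either estimate with the elliptic bound $\|u\|_{H^2(D)}^2\le C\big(\|u\|_{L^2(D)}^2+\|\Delta u\|_{L^2(D)}^2\big)$ (Theorem 8.13 of \cite{Gilbarg}, using $\pa D\in\mathcal{C}^2$) and the Poincar\'e-type inequality $\|u\|_{L^2(D)}\le c\|\Delta u\|_{L^2(D)}$, valid since $u$ has zero trace, yields $\mathcal{A}_k(u,u)\ge c_0\|u\|_{H^2(D)}^2$ and likewise for $\widetilde{\mathcal{A}}_k$, with $c_0>0$ independent of $u$ and of $k\ge 0$. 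Via the Riesz identification this is precisely coercivity of $\mathbb{A}_k$ (resp.\ $\widetilde{\mathbb{A}}_k$), together with the self-adjointness established above.

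Finally, for the Fredholm conclusion I would argue that a coercive self-adjoint operator is boundedly invertible (Lax--Milgram), so $\mathbb{A}_k-k^2\mathbb{B}=\mathbb{A}_k\big(\mathbb{I}-k^2\mathbb{A}_k^{-1}\mathbb{B}\big)$ with $\mathbb{A}_k^{-1}\mathbb{B}$ compact; then $\mathbb{I}-k^2\mathbb{A}_k^{-1}\mathbb{B}$ is Fredholm of index zero by Riesz--Fredholm theory, hence so is $\mathbb{A}_k-k^2\mathbb{B}$, and the identical argument applies to $\widetilde{\mathbb{A}}_k-k^2\widetilde{\mathbb{B}}$. The step I expect to require the most care is not in this theorem at all but in the lower bound $\mathcal{A}_k(u,u)\ge C\|\Delta u\|_{L^2(D)}^2$ in the regime $1<n<2$, where the naive pointwise splitting of $|\Delta u+k^2u|^2$ fails and the argument of \cite{chtevexist} must be adapted; within the present statement the only genuinely new verification is the compactness of the extra boundary term in $\widetilde{\mathbb{B}}$, which is routine given the trace theorem. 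I would also stress that the coercivity constant is independent of $k$, since this is exactly what will later allow $\mathbb{A}_k$ and $\widetilde{\mathbb{A}}_k$ to play the role of the family $A_k$ in Lemma~\ref{eig_key_lemma}.
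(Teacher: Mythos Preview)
Your proposal is correct and follows essentially the same route as the paper. In fact the paper gives no separate proof block for this theorem at all: it is stated as a summary of the discussion immediately preceding it, which establishes self-adjointness from the Hermitian forms, compactness of $\mathbb{B}$ and $\widetilde{\mathbb{B}}$ via the embeddings $H^2(D)\hookrightarrow H^1(D)$ and $H^{1/2}(\pa D)\hookrightarrow L^2(\pa D)$, positivity from $\eta>0$, and coercivity from the displayed lower bounds together with the elliptic estimate of \cite{Gilbarg}; your write-up simply spells out these steps (and the Fredholm factorization $\mathbb{A}_k(\mathbb{I}-k^2\mathbb{A}_k^{-1}\mathbb{B})$) more explicitly than the paper does.
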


Notice that the transmission eigenvalues are the solutions to $\lambda_j (k)-k^2=0$ where $\lambda_j(k)=\lambda_j(k ; n,\eta)$ are the eigenvalues for the generalized eigenvalue problem 
\begin{align}
\mathbb{A}_k u= \lambda_j(k) \mathbb{B} u \,\, \text{ for } \, \, 1<n \quad \text{ or } \quad \widetilde{\mathbb{A}}_k u= \lambda_j(k) \widetilde{\mathbb{B}} u  \,\, \text{ for } \, \, 0<n<1. \label{geneig} 
\end{align}
From the above discussion we have that $\mathbb{A}_k$, $\widetilde{\mathbb{A}}_k$, $\mathbb{B}_k$, and $\widetilde{ \mathbb{B}}_k$ satisfy the assumptions of Theorem 2.3 of \cite{chtevexist}. To prove existence it remains to show that the operators ${\mathbb{A}}_k-k^2{\mathbb{B}}$ and $\widetilde{\mathbb{A}}_k-k^2 \widetilde{\mathbb{B}}$ are positive for some $k_0$ and non-positive for some $k_1$ on a finite dimensional subspace of $\tilde{H}^2_0(D)$.

\begin{theorem}  \label{positive}
Assume that either $n>1$ or  $0<n<1$ a.e. in $D$ and $\eta > 0$ and $\partial D$ then for $k$ sufficiently small $\text{for all }  u \in \tilde{H}^2_0(D)$
\[
\mathcal{A}_k(u,u) -k^2\mathcal{B}(u,u) \geq \delta \| \Delta u \|^2_{L^2(D)}   \quad \text{ or  } \quad \widetilde{ \mathcal{A}}_k(u,u) -k^2 \widetilde{ \mathcal{B}} (u,u)  \geq \delta \| \Delta u \|^2_{L^2(D)}.
\]
\end{theorem}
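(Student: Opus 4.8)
The plan is to read both estimates directly off two facts that are already in hand above the statement, together with one elementary auxiliary inequality. First, for every $k\ge0$ one has $\mathcal{A}_k(u,u)\ge C\|\Delta u\|^2_{L^2(D)}$ with $C=C(n)>0$ (shown above, following \cite{chtevexist}) when $n>1$, and $\widetilde{\mathcal{A}}_k(u,u)\ge\|\Delta u\|^2_{L^2(D)}$ when $0<n<1$, by simply discarding the non-negative term $\int_D\frac{n}{1-n}|\Delta u+k^2u|^2\,\dif x$. Thus in both cases the leading form is coercive in $\|\Delta u\|_{L^2(D)}$ \emph{uniformly in $k$}, and everything reduces to bounding $k^2\mathcal{B}(u,u)$, respectively $k^2\widetilde{\mathcal{B}}(u,u)$, by $C'k^2\|\Delta u\|^2_{L^2(D)}$ with $C'$ independent of $u$ and $k$. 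The tool for this is that, since the trace of $u\in\tilde{H}^2_0(D)$ vanishes, integration by parts gives $\|\nabla u\|^2_{L^2(D)}=-\operatorname{Re}\int_D u\,\Delta\overline u\,\dif x\le\|u\|_{L^2(D)}\|\Delta u\|_{L^2(D)}$, which combined with Poincar\'e's inequality $\|u\|_{L^2(D)}\le c_P\|\nabla u\|_{L^2(D)}$ yields $\|\nabla u\|_{L^2(D)}\le c_P\|\Delta u\|_{L^2(D)}$ and $\|u\|_{L^2(D)}\le c_P^2\|\Delta u\|_{L^2(D)}$; with Theorem 8.13 of \cite{Gilbarg} this also shows that $\|\Delta u\|_{L^2(D)}$ is an equivalent norm on $\tilde{H}^2_0(D)$ (as already used in the discreteness argument).

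For $n>1$ we have $\mathcal{B}(u,u)=\|\nabla u\|^2_{L^2(D)}\le c_P^2\|\Delta u\|^2_{L^2(D)}$, so
\[
\mathcal{A}_k(u,u)-k^2\mathcal{B}(u,u)\ \ge\ \bigl(C-c_P^2\,k^2\bigr)\|\Delta u\|^2_{L^2(D)},
\]
and the claim follows with $\delta=C/2$ as soon as $c_P^2k^2\le C/2$. For $0<n<1$ the form $\widetilde{\mathcal{B}}$ carries the additional surface term $\int_{\partial D}\frac1\eta\bigl|\frac{\partial u}{\partial\nu}\bigr|^2\,\dif s$, but this is already controlled: since $\widetilde{\mathcal{B}}$ is the bounded sesquilinear form associated with the bounded operator $\widetilde{\mathbb{B}}$, one has $\int_{\partial D}\frac1\eta\bigl|\frac{\partial u}{\partial\nu}\bigr|^2\,\dif s\le\widetilde{\mathcal{B}}(u,u)\le\|\widetilde{\mathbb{B}}\|\,\|u\|^2_{H^2(D)}\le C_\partial\|\Delta u\|^2_{L^2(D)}$, the last step by the norm equivalence above; equivalently, $\frac{\partial u}{\partial\nu}=\nabla u\cdot\nu$ satisfies $\|\frac{\partial u}{\partial\nu}\|_{L^2(\partial D)}\le C_1\|\nabla u\|_{H^1(D)}\le C_1\|u\|_{H^2(D)}\le C_2\|\Delta u\|_{L^2(D)}$ by the $\mathcal{C}^2$-trace theorem and the elliptic estimate, after absorbing $\|1/\eta\|_{L^\infty(\partial D)}$. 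Together with $\|\nabla u\|^2_{L^2(D)}\le c_P^2\|\Delta u\|^2_{L^2(D)}$ this gives
\[
\widetilde{\mathcal{A}}_k(u,u)-k^2\widetilde{\mathcal{B}}(u,u)\ \ge\ \bigl(1-(c_P^2+C_\partial)\,k^2\bigr)\|\Delta u\|^2_{L^2(D)},
\]
which is at least $\tfrac12\|\Delta u\|^2_{L^2(D)}$ for $k$ small, i.e. $\delta=\tfrac12$. In both cases the resulting inequality shows that $\mathcal{A}_k-k^2\mathcal{B}$, respectively $\widetilde{\mathcal{A}}_k-k^2\widetilde{\mathcal{B}}$, is positive on $\tilde{H}^2_0(D)$ for all such $k$, which is exactly hypothesis~1 of Lemma~\ref{eig_key_lemma}.

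The only point that needs a moment's care is the negative surface contribution $-k^2\int_{\partial D}\frac1\eta\bigl|\frac{\partial u}{\partial\nu}\bigr|^2\,\dif s$ in the case $0<n<1$: one should not expect $\|\partial_\nu u\|^2_{L^2(\partial D)}$ to be genuinely small compared with $\|\Delta u\|^2_{L^2(D)}$, and indeed it need not be. What saves the argument is that only a bound $\|\partial_\nu u\|^2_{L^2(\partial D)}\le C_\partial\|\Delta u\|^2_{L^2(D)}$ with $C_\partial$ independent of $u$ \emph{and} of $k$ is required, after which the explicit prefactor $k^2$ produces the necessary smallness as $k\to0^+$. This is precisely where the $\mathcal{C}^2$-regularity of $\partial D$ enters, through the global elliptic estimate (Theorem 8.13 of \cite{Gilbarg}) and the boundedness of the normal-trace map from $H^2(D)$ into $L^2(\partial D)$. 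In the case $n>1$ no analogous difficulty arises, since there the surface term sits inside $\mathcal{A}_k$ with the favourable sign and may simply be dropped.
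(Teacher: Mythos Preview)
Your proof is correct and follows essentially the same route as the paper: drop the favourable non-negative terms to get $\mathcal{A}_k(u,u)\ge C\|\Delta u\|^2$ (resp.\ $\widetilde{\mathcal{A}}_k(u,u)\ge\|\Delta u\|^2$), then control $k^2\mathcal{B}$ (resp.\ $k^2\widetilde{\mathcal{B}}$) by $k^2\|\Delta u\|^2$ via Poincar\'e, the trace theorem, and the elliptic estimate giving the norm equivalence on $\tilde{H}^2_0(D)$. The only cosmetic difference is that the paper bounds $\|\nabla u\|^2\le\|u\|^2_{H^2}\le C_1\|\Delta u\|^2$ directly from the norm equivalence, whereas you first derive $\|\nabla u\|\le c_P\|\Delta u\|$ by integrating by parts.
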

\begin{proof}
We first consider the case where $0<n<1$ and since $\eta >0$ we have that 
\begin{align*}
\hspace*{-1cm} \widetilde{\mathcal{A}}_k(u,u) -k^2 \widetilde{\mathcal{B}}(u,u) &\geq  ||\Delta u||^2_{L^2(D)} -k^2 \left( ||\grad u||^2_{L^2(D)} +  \int\limits_{\partial D} \frac{1}{ \eta } \left| \frac{\partial u}{\partial \nu} \right|^2 \,  \dif s \right) \\
							&\geq   ||\Delta u||^2_{L^2(D)} -k^2 \left( || u ||^2_{H^2(D)} + \int\limits_{\partial D} \frac{1}{ \eta } \left| \frac{\partial u}{\partial \nu} \right|^2 \,   \dif s \right).
\end{align*}
Recall that the for all $u \in \tilde{H}_0^2(D)$ we have that there exists $C_1>0$ such that 
\[
|| u||^2_{H^2(D)} \leq C_1 ||\Delta u||^2_{L^2(D)}. 
\]
Now let $\inf\limits_{x \in \pa D} \eta =\eta_{min} >0$, then we have that $\frac{1}{\eta} \leq \frac{1}{\eta_{min}}$ for 
almost all $x \in \pa D$. Using these estimates yields that 
\begin{align*}
\hspace*{-0.5cm} \widetilde{\mathcal{A}}_k(u,u) -k^2\widetilde{\mathcal{B}}(u,u) \geq  ||\Delta u||^2_{L^2(D)} -k^2 \left( C_1 || \Delta u||^2_{L^2(D)} + \frac{1}{\eta_{min}} \left\| {\partial u}/{\partial \nu}\right\|^2_{L^2(\partial D)} \right). 
\end{align*}
By the trace theorem we obtain   
\[
\left\|\frac{\partial u}{\partial \nu}\right\|^2_{L^2(\partial D)} \leq C_2 || u||^2_{H^2(D)}. 
\]
Combining this with the previous estimates we conclude  
\begin{align*}
\widetilde{\mathcal{A}}_k(u,u) -k^2\widetilde{\mathcal{B}}(u,u) \geq \left[ 1-C_1k^2 \left(1+\frac{C_2}{\eta_{min}} \right) \right] ||\Delta u||^2_{L^2(D)}.
\end{align*}
Since $\eta_{min} >0$ we have that $\widetilde{\mathcal{A}}_k(u,u) -k^2\widetilde{\mathcal{B}}(u,u) \geq \delta \| \Delta u \|^2_{L^2(D)}$ for all $k>0$ sufficiently small.  

For $n>1$, since $\eta>0$ a.e. on $\pa D$, we have
\begin{align*}
&{\mathcal{A}}_k(u,u) -k^2 {\mathcal{B}}(u,u) \\
&\hspace{0.4in}= \int\limits_D  \frac{1}{ n-1} |\Delta u +k^2 u |^2 +k^4 |u|^2 \, \dif x  - k^2  \int\limits_D |\grad u|^2 \, \dif x+k^2 \int\limits_{\partial D} \frac{1}{\eta} \left| \frac{\partial u}{\partial \nu} \right|^2 \, \dif s \\
               &\hspace{0.4in}\geq C\| \Delta u\|^2_{L^2(D)} -k^2 \| \grad u\|^2_{L^2(D)}  \\
               &\hspace{0.4in}\geq  C\| \Delta u\|^2_{L^2(D)} - k^2 \|  u\|^2_{H^2(D)} \\
               &\hspace{0.4in}\geq (C-k^2 C_1) \| \Delta u\|^2_{L^2(D)},
\end{align*}
where again $C_1$ is the constant such that $||u||^2_{H^2(D)} \leq C_1 ||\Delta u||^2_{L^2(D)} $  for all $u \in \tilde{H}_0^2(D)$ and $C$ is the constant where 
\[ 
\int\limits_D  \frac{1}{ n-1} |\Delta u +k^2 u |^2 +k^4 |u|^2 \, \dif x  \geq C ||\Delta u||^2_{L^2(D)} \quad \text{for all}\quad u \in \tilde{H}_0^2(D).
\]
Hence, for all $k^2$ sufficiently small we have that ${\mathcal{A}}_k(u,u) -k^2 {\mathcal{B}}(u,u) \geq \delta \| \Delta u \|^2_{L^2(D)}$, proving the claim. 
\end{proof}

We are now ready to prove the main result of the paper.
\begin{theorem}  \label{exists}
Assume that either $n>1$ or  $0<n<1$  a.e. in $D$, then there exists infinitely many real transmission eigenvalues. 
\end{theorem}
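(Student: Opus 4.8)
The plan is to verify the two hypotheses of Lemma~\ref{eig_key_lemma} with $U=\tilde{H}^2_0(D)$, $A_k=\mathbb{A}_k$ and $B=\mathbb{B}$ in the case $n>1$ (and $A_k=\widetilde{\mathbb{A}}_k$, $B=\widetilde{\mathbb{B}}$ in the case $0<n<1$), and then feed the output into the discreteness statement. Theorem~\ref{tefredholm} already shows that $k\mapsto A_k$ is a continuous family of coercive self-adjoint operators and that $B$ is self-adjoint, non-negative and compact, while Theorem~\ref{positive} supplies a $k_0>0$ with $A_{k_0}-k_0^2B$ positive on $\tilde{H}^2_0(D)$; this is hypothesis~(1). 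The entire difficulty is hypothesis~(2): for every $m\in\N$ we must produce $k_1>k_0$ and an $m$-dimensional subspace of $\tilde{H}^2_0(D)$ on which $A_{k_1}-k_1^2B$ is non-positive.

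I would obtain such a subspace from transmission eigenvalues of small balls with a constant index. Treat $n>1$; the case $0<n<1$ is identical after replacing the monotone function $t\mapsto 1/(t-1)$ by $t\mapsto t/(1-t)$. Fix $m$, let $n_0$ be a constant with $1<n_0\le n$ a.e.\ in $D$, and choose $\rho>0$ so small that $D$ contains $m$ pairwise disjoint balls $B_\rho(x_1),\dots,B_\rho(x_m)$ with $\overline{B_\rho(x_i)}\subset D$. Recall the classical fact (see e.g.\ \cite{chtevexist} and the references therein) that the interior transmission problem on a ball of radius $\rho$ with constant refractive index $n_0$ --- namely $\Delta w+k^2n_0w=0$, $\Delta v+k^2v=0$ in the ball, $w=v$ and $\partial_\nu w=\partial_\nu v$ on its boundary --- has a smallest real transmission eigenvalue $k_1=k_1(\rho)$ with a nontrivial eigenfunction $u_0\in H^2_0(B_\rho)$; by scaling $k_1(\rho)=k_1(1)/\rho$, so shrinking $\rho$ further we may also assume $k_1>k_0$. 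Translating $u_0$ to each $B_\rho(x_i)$ and extending by zero yields $u_1,\dots,u_m\in\tilde{H}^2_0(D)$ with pairwise disjoint supports, hence linearly independent; let $V_m$ be their span.

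It remains to estimate $\mathcal{A}_{k_1}(u,u)-k_1^2\mathcal{B}(u,u)$ for $u=\sum_i c_iu_i\in V_m$. The disjoint supports make all cross terms vanish, so this equals $\sum_i|c_i|^2\big(\mathcal{A}_{k_1}(u_i,u_i)-k_1^2\mathcal{B}(u_i,u_i)\big)$. Since $u_i\equiv0$ in a neighbourhood of $\partial D$, the conductive boundary integral in \eqref{bad1} drops out and
\[
\mathcal{A}_{k_1}(u_i,u_i)-k_1^2\mathcal{B}(u_i,u_i)=\int\limits_{B_\rho(x_i)}\frac{1}{n-1}\,|\Delta u_i+k_1^2u_i|^2+k_1^4|u_i|^2-k_1^2|\nabla u_i|^2\,\dif x.
\]
Because $n\ge n_0$ a.e.\ on $B_\rho(x_i)$ we have $\tfrac{1}{n-1}\le\tfrac{1}{n_0-1}$ pointwise, so the right-hand side is bounded above by the same expression with $n$ replaced by the constant $n_0$. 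But that expression is precisely the value at $(u_i,u_i)$ of the sesquilinear form of the interior transmission problem on $B_\rho(x_i)$ with constant index $n_0$ (obtained from $(\Delta+k^2)\tfrac{1}{n_0-1}(\Delta u_i+k^2n_0u_i)=0$ by two integrations by parts), and it vanishes since $u_i$ is a transmission eigenfunction at $k=k_1$. Hence $A_{k_1}-k_1^2\mathbb{B}$ is non-positive on $V_m$, which is hypothesis~(2).

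Lemma~\ref{eig_key_lemma} then gives, for each $j=1,\dots,m$, a $k^{(j)}\in[k_0,k_1]$ with $\lambda_j(k^{(j)})=(k^{(j)})^2$, and each such $k^{(j)}$ is a transmission eigenvalue. Since the $\lambda_j(k)$ are enumerated in non-decreasing order, the eigenspace dimensions $\dim\ker\big(A_{k^*}-(k^*)^2\mathbb{B}\big)$ of the distinct values $k^*$ among $k^{(1)},\dots,k^{(m)}$ add up to at least $m$; i.e.\ there are at least $m$ transmission eigenvalues in $[k_0,k_1]$ counted with multiplicity. As $m$ is arbitrary and, by the discreteness theorem of Section~3, every transmission eigenvalue has finite multiplicity, the set of transmission eigenvalues must be infinite. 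The main obstacle is the construction of $V_m$ in hypothesis~(2): it rests on quoting the existence of a real transmission eigenvalue for the constant-index ball and on the elementary monotonicity comparison above, the point being that the extra conductive boundary term is invisible to test functions compactly supported in $D$, so the problem reduces here to the one treated in \cite{chtevexist}.
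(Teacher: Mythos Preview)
Your proof is correct and takes essentially the same approach as the paper: construct test functions from eigenfunctions of the classical ($\eta=0$) interior transmission problem on disjoint balls with constant refractive index $n_{\min}$ (resp.\ $n_{\max}$), extend them by zero so the conductive boundary term in \eqref{bad1} (resp.\ \eqref{bad4}) drops out, and compare via the pointwise monotonicity $\tfrac{1}{n-1}\le\tfrac{1}{n_{\min}-1}$. If anything, your write-up is slightly more careful than the paper's in two places---you explicitly use the scaling $k_1(\rho)=k_1(1)/\rho$ to ensure $k_1>k_0$ before applying Lemma~\ref{eig_key_lemma}, and you invoke the finite multiplicity from the discreteness theorem to pass from ``at least $m$ eigenvalues counted with multiplicity'' to ``infinitely many distinct eigenvalues,'' whereas the paper simply lets the number of balls $M(\ep)\to\infty$ as $\ep\to0$.
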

\begin{proof}
We will prove the result for the case of $n>1$ and the other case is similar. Let $B_j=B(x_j , \ep):=\{ x \in \R^m : |x-x_j | <  \ep \}$ where $x_j \in D$ and $\ep>0$. Define $M(\ep)$ as the number of  disjoint balls $B_j$, i.e., $\overline{B_i} \cap \overline{B_j} = \emptyset$, with $\varepsilon$ small enough 
such that $\overline{B_j} \subset D$. 
%
%
It can be shown by using  separation of variables that there exists infinitely many transmission eigenvalues to 
\begin{align}
\Delta w_j +k^2 n_{min} w_j=0 \quad \text{and} \quad \Delta v_j + k^2 v_j=0  \quad &\textrm{ in } \,  B_j, \label{teball1} \\
 w_j-v_j=0  \quad  \text{and} \quad  \frac{\partial w_j}{\partial \nu}-\frac{\partial v_j}{\partial \nu}= 0 \quad &\textrm{ on } \partial B_j.  \label{teball2} 
\end{align}
where $n_{min}=\inf n(x)$ for $x \in D$. Let $u_j$ denote the difference $u_j=v_j-w_j \in H^2_0(B_j)$ and let $\tilde{u}_j$ be the extension of $u_j$ by zero to $D$. We note that $\tilde{u}_j \in H^2_0(D) \subset \tilde{H}_0^2(D)$. Since the supports of $\tilde{u}_j$ are disjoint we have that $\tilde{u}_j$ is orthogonal to $\tilde{u}_i$ for all $i \neq j$ in $\tilde{H}_0^2(D)$. This implies that $W_{M(\ep)} = \text{ span}\{ \tilde{u}_1 , \tilde{u}_2 , \cdots,  \tilde{u}_{M(\ep)}  \}$ forms an $M(\ep)$ dimensional subspace of $\tilde{H}_0^2(D)$. Further, for any transmission eigenvalue $k$ of \eqref{teball1}--\eqref{teball2} we have
\begin{align*}
0 &=  \int\limits_D \frac{1}{ n_{min}-1 }(\Delta \tilde{u}_j +k^2 \tilde{u}_j) (\Delta \overline{\tilde{u}_j } +k^2 n \overline{\tilde{u}_j}) \, \dif x\\
   &=  \int\limits_{B_j} \frac{1}{ n_{min}-1 }(\Delta \tilde{u}_j +k^2 \tilde{u}_j) (\Delta \overline{\tilde{u}_j } +k^2 n \overline{\tilde{u}_j}) \, \dif x \\
    &= \int\limits_{B_j}  \frac{1}{ n_{min}-1} | \Delta \tilde{u}_j +k^2 \tilde{u}_j |^2 +k^4 |\tilde{u}_j|^2 \, \dif x  - k^2 \int\limits_{ B_j} |\grad \tilde{u}_j|^2 \,  \dif x.
\end{align*}
Now, let $k_\ep$ be the first transmission eigenvalue of \eqref{teball1}--\eqref{teball2} in some ball $B_j$ with 
the eigenfunction $u_j$. Then, for the extension $\tilde{u}_j$  we have
\begin{align*}
\mathcal{A}_{k_{\ep}} (\tilde{u}_j, \tilde{u}_j ) -k_{\ep}^2\mathcal{B}(\tilde{u}_j,\tilde{u}_j) &\\
&\hspace{-1.2in}=\int\limits_{D}  \frac{1}{ n-1} | \Delta \tilde{u}_j +k_{\ep}^2 \tilde{u}_j |^2 +k_{\ep}^4 |\tilde{u}_j|^2 \, dx  - k_{\ep}^2 \int\limits_{D} |\grad \tilde{u}_j|^2 \, \dif x+k_{\ep}^2  \int\limits_{\partial D} \frac{1}{\eta} \left| \frac{\partial \tilde{u}_j}{\partial \nu} \right|^2 \,  \dif s\\
												 &\hspace{-1.2in}=  \int\limits_{D}  \frac{1}{ n-1} | \Delta \tilde{u}_j +k_{\ep}^2 \tilde{u}_j |^2 +k_{\ep}^4 |\tilde{u}_j|^2 \, \dif x  - k_{\ep}^2 \int\limits_{D} |\grad \tilde{u}_j|^2 \, \dif x\\
												                  &\hspace{-1.2in}\leq \int\limits_{B_j}  \frac{1}{ n_{min}-1} | \Delta \tilde{u}_j +k_{\ep}^2 \tilde{u}_j |^2 +k_{\ep}^4 |\tilde{u}_j|^2 \, \dif x  - k_{\ep}^2 \int\limits_{ B_j} |\grad \tilde{u}_j|^2 \, \dif x=0.
\end{align*}
Thus,  for all $u \in W_{M(\ep)}$, we have $\mathcal{A}_{k_{\ep}} ({u}, {u}) -k_{\ep}^2\mathcal{B}({u},{u}) \leq 0$. By Lemma \ref{eig_key_lemma} this gives that there are ${M(\ep)}$ transmission eigenvalues in the interval $(0 , k_\ep]$. Now, notice that as $\ep \rightarrow 0$ that $M(\ep) \rightarrow \infty$ giving that there are infinitely many transmission eigenvalues. 
\end{proof}

From the proof of Theorem \ref{exists} we have the following upper bound on the first 
transmission eigenvalue of \eqref{eig_helm_1}--\eqref{eig_bc2}, which we denote by $k_1(n , \eta , D)$. 
\begin{corollary}\label{bounds1}
Let $\sup_{x\in D} n(x)=n_{max}$ and $\inf_{x\in D} n(x)=n_{min}$. Let $B_R$ be a ball of radius $R$ sufficiently small such that $B_R \subseteq D$. Then
\begin{enumerate}
\item if $n>1$ for almost every $x \in D$, then
$$ k_1(n , \eta , D) \leq k_1( n_{min} , B_R),$$
\noindent where $k_1( n_{min} , B_R)$ is the first transmission eigenvalue of \eqref{teball1}--\eqref{teball2} for the ball $B_R$.
\item if $0<n<1$ for almost every $x \in D$, then 
$$ k_1(n , \eta , D) \leq k_1( n_{max} , B_R),$$
where $k_1( n_{manx} , B_R)$ is the first transmission eigenvalue of \eqref{teball1}--\eqref{teball2} for the ball $B_R$ with $n_{min}$ replaced by $n_{max}$. 
\end{enumerate}
\end{corollary}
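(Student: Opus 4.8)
The plan is to apply Lemma~\ref{eig_key_lemma} with $m=1$, reusing the test-function construction from the proof of Theorem~\ref{exists} but for a single fixed ball rather than in the limit $\ep\to 0$. Recall that the transmission eigenvalues of \eqref{eig_helm_1}--\eqref{eig_bc2} are exactly the solutions of $\lambda_j(k)-k^2=0$ for the generalized eigenvalue problem \eqref{geneig}; hence it suffices to exhibit some $k_0>0$ and a one-dimensional subspace of $\tilde H^2_0(D)$ on which $\mathbb{A}_{k}-k^2\mathbb{B}$ (resp. $\widetilde{\mathbb{A}}_{k}-k^2\widetilde{\mathbb{B}}$) is non-positive, with $k$ equal to the claimed bound. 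Lemma~\ref{eig_key_lemma} then produces a transmission eigenvalue of the original problem in $[k_0,k]$, which forces $k_1(n,\eta,D)\le k$.

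Condition (1) of Lemma~\ref{eig_key_lemma} is immediate from Theorem~\ref{positive}: for all sufficiently small $k_0>0$ we have $\mathcal{A}_{k_0}(u,u)-k_0^2\mathcal{B}(u,u)\ge \delta\|\Delta u\|_{L^2(D)}^2$ (and likewise for the tilde forms), so $\mathbb{A}_{k_0}-k_0^2\mathbb{B}$ is positive; since $k_0$ may be taken as small as we like, the ordering $k_0<k_1(n_{min},B_R)$ needed to make $[k_0,k_1(n_{min},B_R)]$ a genuine interval is automatic. For condition (2) in the case $n>1$, fix the ball $B_R\subseteq D$, let $k:=k_1(n_{min},B_R)$ be the first transmission eigenvalue of \eqref{teball1}--\eqref{teball2} on $B_R$ with constant index $n_{min}$, let $u_R\in H^2_0(B_R)$ be the difference of the corresponding eigenfunctions, and let $\tilde u_R\in H^2_0(D)\subset\tilde H^2_0(D)$ be its extension by zero. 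Exactly as in the proof of Theorem~\ref{exists}, the variational identity for the constant-index ball problem gives
\begin{align*}
\int_{B_R}\frac{1}{n_{min}-1}\,|\Delta u_R+k^2 u_R|^2+k^4|u_R|^2\,\dif x-k^2\int_{B_R}|\grad u_R|^2\,\dif x=0 .
\end{align*}
Since $\tilde u_R$ is supported in $B_R$ the boundary integral in $\mathcal{A}_{k}$ vanishes, and because $t\mapsto 1/(t-1)$ is decreasing on $(1,\infty)$ while $n\ge n_{min}$ a.e., we obtain
\begin{align*}
\mathcal{A}_{k}(\tilde u_R,\tilde u_R)-k^2\mathcal{B}(\tilde u_R,\tilde u_R)
\le\int_{B_R}\frac{1}{n_{min}-1}\,|\Delta\tilde u_R+k^2\tilde u_R|^2+k^4|\tilde u_R|^2\,\dif x-k^2\int_{B_R}|\grad\tilde u_R|^2\,\dif x=0 .
\end{align*}
Thus $\mathbb{A}_{k}-k^2\mathbb{B}$ is non-positive on $\mathrm{span}\{\tilde u_R\}$, and Lemma~\ref{eig_key_lemma} yields a transmission eigenvalue in $[k_0,k_1(n_{min},B_R)]$, proving the first assertion.

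The case $0<n<1$ is handled identically with $\widetilde{\mathcal{A}}_k,\widetilde{\mathcal{B}}$ in place of $\mathcal{A}_k,\mathcal{B}$, taking $k:=k_1(n_{max},B_R)$ and $u_R$ the eigenfunction difference for \eqref{teball1}--\eqref{teball2} on $B_R$ with $n_{min}$ replaced by $n_{max}$; here one uses that $t\mapsto t/(1-t)$ is increasing on $(0,1)$ together with $n\le n_{max}$ a.e. to get $\widetilde{\mathcal{A}}_{k}(\tilde u_R,\tilde u_R)-k^2\widetilde{\mathcal{B}}(\tilde u_R,\tilde u_R)\le 0$. The only point requiring care is the direction of these two monotonicities — i.e.\ which constant-index ball problem furnishes the correct one-sided bound on the relevant sesquilinear form — together with the standard fact that the eigenfunction difference for the ball problem lies in $H^2_0(B_R)$, so that its zero-extension is an admissible test function in $\tilde H^2_0(D)$. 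Beyond this bookkeeping I do not anticipate any genuine obstacle, since all the analytic content is already contained in Theorems~\ref{positive} and~\ref{exists} and Lemma~\ref{eig_key_lemma}.
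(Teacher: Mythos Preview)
Your proposal is correct and follows essentially the same approach as the paper: the corollary is stated there as an immediate consequence of the proof of Theorem~\ref{exists}, and you have accurately reconstructed that argument by specializing to a single ball $B_R$ and applying Lemma~\ref{eig_key_lemma} with $m=1$. The only minor remark is that in the case $0<n<1$ the boundary integral sits in $\widetilde{\mathcal{B}}$ rather than in $\widetilde{\mathcal{A}}_k$, but since $\tilde u_R\in H^2_0(D)$ forces $\partial_\nu\tilde u_R=0$ on $\partial D$, it still vanishes and your inequality goes through exactly as you indicate.
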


The bound in Corollary \ref{bounds1} becomes sharp if $B_R$ is taken to be the largest ball such that $B_R \subseteq D$. 
%
 
\section{Monotonicity of the transmission eigenvalues}
For this section we turn our attention to proving that the first transmission eigenvalue can be used to determine information about the material parameters $n$ and $\eta$. To this end, we will show that the first transmission eigenvalue is a monotonic function with respect to the functions $n$ and $\eta$. From the monotonicity we will obtain a uniqueness result for a homogeneous refractive index and homogeneous conductive boundary parameter. Recall that the transmission eigenvalues satisfy 
\begin{equation}
\lambda_j (k; n ,\eta)-k^2( n ,\eta)=0 \label{teveq}
\end{equation}
and the first transmission eigenvalue is the smallest root of \eqref{teveq} for $\lambda_1 (k; n ,\eta)$. Notice that $\lambda_1 (k; n ,\eta)$ satisfies for $u \neq 0$
\begin{equation}
\lambda_1(k; n ,\eta) = \min_{u \in \tilde{H}^2_0(D)} \frac{ \mathcal{A}_k(u,u) }{  \mathcal{B}(u,u)}  \,\, \text{ for } \, \, 1<n \label{minprob1}
\end{equation}
or  
\begin{equation}
 \lambda_1(k; n ,\eta) =\min_{u \in \tilde{H}^2_0(D)} \frac{ \widetilde{\mathcal{A}}_k(u,u) }{ \widetilde{ \mathcal{B}}(u,u)}  \,\, \text{ for } \, \,0< n<1, \label{minprob2}
 \end{equation}
 where the sesquilinear forms on $\tilde{H}^2_0(D)$ are defined by \eqref{bad1}--\eqref{bad4}. It is clear that $\lambda_1(k; n ,\eta)$ is a continuous function of $k \in (0, \infty)$. Notice that the minimizers of \eqref{minprob1} and \eqref{minprob2} are the eigenfunctions corresponding to $ \lambda_1(k; n ,\eta)$. We will denote the first transmission eigenvalue as $k_1(n,\eta)$. 

\begin{theorem}\label{mono}
Assume that $0<n_1 \leq n_2$ and $0<\eta_1 \leq \eta_2 $, then we have that
\begin{enumerate}
\item if $1<n_1$, then $k_1(n_2 , \eta_2) \leq k_1(n_1 , \eta_1)$.
\item if $n_2 <1$, then $k_1(n_1 , \eta_1) \leq k_1(n_2 , \eta_2)$.
\end{enumerate}
Moreover, if the inequalities for the parameters $n$ and $\eta$ are strict, then the first transmission eigenvalue is strictly monotone with respect to $n$ and $\eta$. 
\end{theorem}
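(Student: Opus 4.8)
The plan is to separate the two issues involved: first to show that, for each \emph{fixed} $k>0$, the lowest generalized eigenvalue $\lambda_1(k;n,\eta)$ of \eqref{geneig} is monotone in the pair $(n,\eta)$, and then to upgrade this to monotonicity of the smallest zero $k_1(n,\eta)$ of $k\mapsto\lambda_1(k;n,\eta)-k^2$ by a continuity and intermediate value argument. I will use throughout the Rayleigh-quotient formulas \eqref{minprob1}--\eqref{minprob2}, together with the following facts already at our disposal: $\mathcal{B}$ and $\widetilde{\mathcal{B}}$ are positive definite on $\tilde{H}^2_0(D)$, so the quotients are well defined; $k\mapsto\lambda_1(k;n,\eta)$ is continuous; $\lambda_1(k;n,\eta)>k^2$ for all sufficiently small $k>0$ by Theorem~\ref{positive}; and $k\mapsto\lambda_1(k;n,\eta)-k^2$ has a zero by Theorem~\ref{exists}. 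Consequently $k_1(n,\eta)$ is well defined as its smallest positive zero.

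For the fixed-$k$ step one simply reads off how the parameters enter the forms. If $1<n$, the $(n,\eta)$-dependence of $\mathcal{A}_k(u,u)$ occurs only through the factor $\tfrac1{n-1}$ in front of $|\Delta u+k^2u|^2$ and the factor $\tfrac1{\eta}$ in front of $|\partial_\nu u|^2$ on $\partial D$, both pointwise nonincreasing, while $\mathcal{B}$ is parameter-free; hence $0<n_1\le n_2$ with $1<n_1$ and $0<\eta_1\le\eta_2$ give $\mathcal{A}_k^{(n_1,\eta_1)}(u,u)\ge\mathcal{A}_k^{(n_2,\eta_2)}(u,u)$ for all $u$, so $\lambda_1(k;n_1,\eta_1)\ge\lambda_1(k;n_2,\eta_2)$ for every $k>0$. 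If $0<n<1$, the dependence is organized differently: $\widetilde{\mathcal{A}}_k(u,u)$ depends on $n$ through $\tfrac{n}{1-n}$, which is \emph{increasing} on $(0,1)$, whereas the $\tfrac1{\eta}$ term has moved into $\widetilde{\mathcal{B}}$; combining $\widetilde{\mathcal{A}}_k^{(n_1)}(u,u)\le\widetilde{\mathcal{A}}_k^{(n_2)}(u,u)$ with $\widetilde{\mathcal{B}}^{(\eta_1)}(u,u)\ge\widetilde{\mathcal{B}}^{(\eta_2)}(u,u)>0$ yields, quotient by quotient, $\lambda_1(k;n_1,\eta_1)\le\lambda_1(k;n_2,\eta_2)$ for every $k>0$. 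To pass to the first zeros in case (1), put $k_*=k_1(n_1,\eta_1)$, so $\lambda_1(k_*;n_1,\eta_1)=k_*^2$ and therefore $\lambda_1(k_*;n_2,\eta_2)-k_*^2\le0$; since $k\mapsto\lambda_1(k;n_2,\eta_2)-k^2$ is continuous, positive near $0$, and nonpositive at $k_*$, it has a zero in $(0,k_*]$, i.e.\ $k_1(n_2,\eta_2)\le k_1(n_1,\eta_1)$. Case (2) is the mirror image: with $k_*=k_1(n_2,\eta_2)$ one has $\lambda_1(k_*;n_1,\eta_1)-k_*^2\le0$, and the same argument gives $k_1(n_1,\eta_1)\le k_1(n_2,\eta_2)$.

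For the strict statement, assume $n_1<n_2$ and $\eta_1<\eta_2$ a.e.\ and test with the \emph{eigenfunction} $u_0\ne0$ realizing $\lambda_1$ at the relevant $k_*$. Because $\lambda_1(k_*)=k_*^2$, this $u_0$ is an actual solution of \eqref{eig_helm_1}--\eqref{eig_bc2}; in particular $u_0\in\tilde{H}^2_0(D)$, so $u_0=0$ on $\partial D$. Inserting $u_0$ into the comparison, the relevant difference of the two competing Rayleigh quotients is a sum of manifestly nonnegative contributions of the form $\int_D c(x)\,|\Delta u_0+k_*^2u_0|^2\,\dif x$ and $k_*^2\int_{\partial D}c'(x)\,|\partial_\nu u_0|^2\,\dif s$ with $c,c'>0$ a.e.\ (for $n>1$ this is literally $\mathcal{A}_{k_*}^{(n_1,\eta_1)}(u_0,u_0)-\mathcal{A}_{k_*}^{(n_2,\eta_2)}(u_0,u_0)$ with $c=\tfrac1{n_1-1}-\tfrac1{n_2-1}$, $c'=\tfrac1{\eta_1}-\tfrac1{\eta_2}$; for $0<n<1$ the analogous bound comes from the difference of the $\widetilde{\mathcal{A}}$-numerators together with the difference of the $\widetilde{\mathcal{B}}$-denominators, weighted by positive constants). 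This quantity vanishes only if $\Delta u_0+k_*^2u_0\equiv0$ in $D$ and $\partial_\nu u_0\equiv0$ on $\partial D$; together with $u_0=0$ on $\partial D$, unique continuation for the Helmholtz equation then forces $u_0\equiv0$, a contradiction. Hence the comparison is strict, so $\lambda_1(k_*;n_2,\eta_2)-k_*^2<0$ in case (1) (resp.\ $\lambda_1(k_*;n_1,\eta_1)-k_*^2<0$ in case (2)), and the intermediate value argument now places the comparison zero strictly inside $(0,k_*)$, giving the strict inequalities. The main obstacle is precisely this last point: the fixed-$k$ monotonicity and the passage to first zeros are bookkeeping, whereas the strict inequality forces one to rule out the degenerate case of a Helmholtz solution with vanishing Cauchy data on $\partial D$ — which is exactly where unique continuation is indispensable, and why the genuine eigenfunction, rather than an arbitrary near-minimizer, must serve as the test function.
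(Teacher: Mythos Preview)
Your argument is correct and follows essentially the same route as the paper: pointwise monotonicity of the sesquilinear forms in $(n,\eta)$, Rayleigh-quotient comparison at the relevant $k_*$ using the corresponding eigenfunction, and an intermediate-value step invoking Theorem~\ref{positive} to locate the smaller first zero. The only notable addition is that you actually justify the strict inequality via unique continuation for the Helmholtz equation (vanishing Cauchy data on $\partial D$ forces $u_0\equiv 0$), whereas the paper states the strict monotonicity without spelling out this step.
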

\begin{proof} 
We start with the case when $n>1$, where we let $k_1=k_1(n_1 , \eta_1)$ and $k_2=k_1(n_2 , \eta_2)$. Therefore, for all $u \in  \tilde{H}^2_0(D)$ such that $|| \grad u||_{L^2(D)}=1$ the inequalities $n_1\leq n_2$ and $\eta_1 \leq \eta_2 $ gives 
\begin{align*}
\hspace*{-1cm}\lambda_1(k_1 ; n_2 , \eta_2) &\leq  \int\limits_D  \frac{1}{ n_2 - 1 } |\Delta u +k_1^2 u|^2 + k_1^4| u|^2 \, \dif x  + k_1^2 \int\limits_{\partial D} \frac{1}{\eta_2} \left| \frac{\partial u}{\partial \nu} \right|^2  \, \dif s\\
					&\leq   \int\limits_D  \frac{1}{ n_1- 1} |\Delta u +k_1^2 u|^2 + k_1^4| u|^2  \, \dif x  +k_1^2 \int\limits_{\partial D} \frac{1}{\eta_1} \left| \frac{\partial u}{\partial \nu} \right|^2  \, \dif s. 
\end{align*}
Now, let $u=u_1$ where $u_1$ is the normalized transmission eigenfunction such that $|| \grad u_1 ||_{L^2(D)}=1$ corresponding with the eigenvalue $k_1$. Notice that \eqref{minprob1} gives  
\begin{align*}
\hspace*{-1cm}\lambda_1(k_1; n_1 , \eta_1)	=  \int\limits_D  \frac{1}{ n_1 -1 } |\Delta u_1 +k_1^2 u_1|^2 + k_1^4 | u_1|^2  \, \dif x  + k_1^2 \int\limits_{\partial D} \frac{1}{\eta_1} \left| \frac{\partial u_1}{\partial \nu} \right|^2  \, \dif s,
\end{align*}
since $u_1$ is the minimizer of \eqref{minprob1} for $n=n_1$ and $\eta=\eta_1$. This yields $\lambda_1(k_1 ; n_2 , \eta_2) \leq \lambda_1(k_1; n_1 , \eta_1)$ and \eqref{teveq} gives 
$$\lambda_1(k_1 ; n_1 , \eta_1)-k_1^2=0.$$ 
Recall that $\lambda_1(k_1 ; n_2 , \eta_2) -k_1^2 \leq 0$. Now, for all $k^2$ sufficiently small we have that ${\mathcal{A}}_k(u,u) -k^2 {\mathcal{B}}(u,u) >0$ by Theorem \ref{positive}. This implies that there is a $\delta >0$ such that for any $k^2 < \delta$ that $\lambda_1(k ; n_2 , \eta_2) -k^2 > 0$ holds. By the continuity we have that $\lambda_1(k ; n_2 , \eta_2) -k^2$ has at least one root in the interval $\left[ \sqrt{ \delta },  k_2 \right]$, since $k_2$ is the smallest root of $\lambda_1(k ; n_2 , \eta_2) -k^2$ we conclude that $k_2 \leq k_1$ proving the claim for this case. 

For the case where $n_2 <1$ we let $k_1=k_1(n_1 , \eta_1)$ and $k_2=k_1(n_2 , \eta_2)$ and the corresponding sesquilinear forms 
\begin{align*}
\widetilde{\mathcal{A}}_k (u,\varphi) &= \int\limits_D  \frac{n}{ 1-n} ( \Delta u +k^2 u )( \Delta \overline{\varphi} +k^2 \overline{\varphi} ) +\Delta u \, \Delta \overline{\varphi}  \, \dif x, \\
\widetilde{\mathcal{B}}(u,\varphi) &= \int\limits_D \grad u \cdot \grad \overline{\varphi}  \, \dif x +  \int\limits_{\partial D} \frac{1}{\eta} \frac{\partial u}{\partial \nu} \frac{\partial \overline{\varphi}}{\partial \nu} \, \dif s.   
\end{align*}

Recall that 
$$  \lambda_1(k; n_1 ,\eta_1) =\min\limits_{u \in \tilde{H}^2_0(D)} \frac{ \widetilde{\mathcal{A}}_k(u,u) \big|_{n=n_1}}{ \widetilde{ \mathcal{B}}(u,u) \big|_{ \mu= \eta_1}},  $$
where we have assumed that $n_1\leq n_2$ and $\eta_1 \leq \eta_2 $ we have that for any value $k$ and for all $u \in  \tilde{H}^2_0(D)$
\begin{align*}
 \int\limits_D  \frac{n_1}{ 1-n_1} | \Delta u +k^2 u |^2 + |\Delta u|^2   \, \dif x  &\leq  \int\limits_D  \frac{n_2}{ 1-n_2} | \Delta u +k^2 u |^2 + |\Delta u|^2  \, \dif x , \\
\int\limits_D |\grad u |^2  \, \dif x +  \int\limits_{\partial D} \frac{1}{\eta_2} \left| \frac{\partial u}{\partial \nu} \right|^2  \, \dif s  &\leq \int\limits_D |\grad u |^2  \, \dif x +  \int\limits_{\partial D} \frac{1}{\eta_1} \left| \frac{\partial u}{\partial \nu} \right|^2  \, \dif s.   
\end{align*}
This gives $\widetilde{\mathcal{A}}_k (u,u) \big|_{ n= n_1} \leq \widetilde{\mathcal{A}}_k (u,u) \big|_{ n= n_1}$ and $\, \widetilde{\mathcal{B}}(u,u) \big|_{ \eta= \eta_2} \leq \widetilde{\mathcal{B}}(u,u) \big|_{ \eta= \eta_1}$.
Now by letting $u=u_2$ where $u_2$ is the transmission eigenfunction corresponding with transmission eigenvalue $k_2$ we have that 
$$ \lambda_1(k_2; n_1 ,\eta_1) \leq \frac{ \widetilde{\mathcal{A}}_{k_2} (u_2 ,u_2 ) \big|_{ n= n_1} }{ \widetilde{\mathcal{B}}(u_2,u_2)  \big|_{ \eta=\eta_1} } \leq \frac{ \widetilde{\mathcal{A}}_{k_2} (u_2,u_2) \big|_{ n= n_2} }{ \widetilde{\mathcal{B}}(u_2,u_2)  \big|_{ \eta= \eta_2} }.  $$
Using that $u_2$ is the minimizer for \eqref{minprob2}  for $n=n_2$ and $\eta=\eta_2$ we can conclude that $ \lambda_1(k_2 ; n_1 ,\eta_1)  \leq  \lambda_1(k_2 ; n_2 ,\eta_2)$ and similar arguments as in the previous case gives $k_1 \leq k_2$.
\end{proof}

By the proof of the previous result we have the following uniqueness result for a homogeneous media and homogeneous boundary parameter $\eta$ from the strict monotonicity of the first transmission eigenvalue. 
\begin{corollary}
\begin{enumerate} 
\item If it is known that $n>1$ or $0<n<1$ is a constant refractive index with $\eta$ known and fixed, then $n$ is uniquely determined by the first transmission eigenvalue. 
\item If $n>1$ or $0<n<1$ is known and fixed with $\eta$ a constant, then the first transmission eigenvalue uniquely determines $\eta$. \\
\end{enumerate}
\end{corollary}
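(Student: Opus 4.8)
The plan is to derive the uniqueness result directly from the strict monotonicity established in Theorem~\ref{mono}, which we have at our disposal. The key observation is that $k_1$, viewed as a function of a single scalar parameter (either $n$ or $\eta$, with the other held fixed), is strictly monotone, and a strictly monotone function is injective. First I would make precise the reduction: fix the domain $D$ and treat $k_1(n,\eta,D)$ as a function $k_1 : (0,\infty)\times(0,\infty) \to (0,\infty)$ restricted to constant parameters. For part~(1), fix $\eta>0$; then the map $n \mapsto k_1(n,\eta)$ is defined on $\{0<n<1\}$ and on $\{n>1\}$ separately (the regime must be prescribed, as Assumption~2 forbids $n=1$ and the two branches behave oppositely). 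For part~(2), fix $n$ in one of the two regimes; then the map $\eta \mapsto k_1(n,\eta)$ is defined on $(0,\infty)$.

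Next I would invoke Theorem~\ref{mono} in its strict form. Suppose $n_1 \neq n_2$ are both $>1$, say $n_1 < n_2$; then the strict inequality $0 < n_1 < n_2$ with $\eta_1 = \eta_2 = \eta$ gives $k_1(n_2,\eta) < k_1(n_1,\eta)$ by conclusion~(1) of Theorem~\ref{mono} together with its concluding sentence about strict monotonicity. Hence $k_1(n_1,\eta) \neq k_1(n_2,\eta)$, so the first transmission eigenvalue determines $n$ uniquely within the regime $n>1$. The case $0<n<1$ is identical using conclusion~(2): if $0 < n_1 < n_2 < 1$ then $k_1(n_1,\eta) < k_1(n_2,\eta)$, again giving injectivity. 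Part~(2) is handled the same way: with $n$ fixed, if $0 < \eta_1 < \eta_2$, Theorem~\ref{mono} gives $k_1(n,\eta_2) < k_1(n,\eta_1)$ when $n>1$ (and the reversed strict inequality when $0<n<1$), so in either case $\eta_1 \mapsto k_1(n,\eta_1)$ is injective, and $\eta$ is recovered from $k_1$.

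I would then assemble these into the corollary statement. For completeness I would note that in part~(1) the hypothesis that one knows \emph{a priori} which regime ($n>1$ or $0<n<1$) the index lies in is essential to the argument, since the two monotone branches could in principle produce the same value of $k_1$; this is already implicit in the phrasing of the statement. The proof is then a one-line consequence: the map in question is strictly monotone, hence injective, hence invertible onto its range, so the parameter is determined by $k_1$.

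The main point to be careful about — rather than a genuine obstacle — is ensuring that the strict version of Theorem~\ref{mono} is actually available: one must check that the chain of inequalities in that proof is strict whenever the parameter inequality is strict. Examining the proof, strictness propagates because, e.g. for $n>1$, a strict inequality $n_1 < n_2$ makes $\frac{1}{n_2-1} < \frac{1}{n_1-1}$ pointwise, and the eigenfunction $u_1$ cannot have $\Delta u_1 + k_1^2 u_1 \equiv 0$ (else $v \equiv 0$, forcing $w=v$ and a trivial pair); thus the comparison of Rayleigh quotients is strict, which forces the first root $k_2$ to lie strictly below $k_1$. Granting this, the corollary follows immediately, so there is no substantive remaining difficulty beyond bookkeeping the two regimes and the two parameters.
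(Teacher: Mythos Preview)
Your proposal is correct and follows exactly the paper's approach: the corollary is stated as an immediate consequence of the strict monotonicity in Theorem~\ref{mono}, and you have spelled out the injectivity argument (strictly monotone implies injective) together with the verification that the inequalities in that proof are strict when the parameter inequality is strict. The paper itself gives no separate proof beyond the remark that the result follows from the strict monotonicity, so your write-up is, if anything, more detailed than the original.
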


It is known (see \cite{chtevexist}) that for a every fixed $k \in (0 , \infty)$ there exists an increasing sequence $\lambda_j(k ; n, \eta)$ of positive generalized eigenvalues of \eqref{geneig} that satisfy 
\begin{align*}
\lambda_j(k; n ,\eta) = \min\limits_{ U \in \mathcal{U}_j} \max\limits_{u \in U\setminus \{ 0\} }  \frac{ \mathcal{A}_k(u,u) }{  \mathcal{B}(u,u)}  \,\, \text{ for } \, \, 1<n 
\end{align*}
or
\begin{align*}
\lambda_j(k; n ,\eta) =\min\limits_{ U \in \mathcal{U}_j} \max\limits_{u \in U \setminus \{ 0\} }  \frac{ \widetilde{\mathcal{A}}_k(u,u) }{ \widetilde{ \mathcal{B}}(u,u)}  \,\, \text{ for } \, \,0< n<1,
\end{align*}
where $\mathcal{U}_j$ is the set of all $j$-dimensional subspaces of $\tilde{H}^2_0(D)$. It is clear from the proof of Theorem \ref{mono} that if $k_j$ is a transmission eigenvalue such that $\lambda_j(k ; n, \eta)-k^2=0$, then $k_j(n,\eta)$ satisfies the monotonicity properties given in Theorem \ref{mono}. 

\begin{corollary}
Assume that $0<n_1 \leq n_2$ and $0<\eta_1 \leq \eta_2 $ and that $k_j$ is a transmission eigenvalue such that $\lambda_j(k)-k^2=0$, where $\lambda_j(k)$ is a positive generalized eigenvalues of \eqref{geneig}, then we have:
\begin{enumerate}
\item if $1<n_1$, then we have that $k_j(n_2 , \eta_2) \leq k_j(n_1 , \eta_1)$.
\item if $n_2 <1$, then we have that $k_j(n_1 , \eta_1) \leq k_j(n_2 , \eta_2)$.
\end{enumerate}
\end{corollary}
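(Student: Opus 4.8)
The plan is to show that the monotonicity argument for the \emph{first} transmission eigenvalue in Theorem \ref{mono} carries over verbatim to the $j$-th transmission eigenvalue, because the two ingredients it relied on — a pointwise comparison of the sesquilinear forms under the parameter inequalities, and the positivity of $\mathcal{A}_k-k^2\mathcal{B}$ (resp.\ $\widetilde{\mathcal{A}}_k-k^2\widetilde{\mathcal{B}}$) for small $k$ from Theorem \ref{positive} — are not specific to $\lambda_1$. I would work with the min-max characterization of $\lambda_j(k;n,\eta)$ recalled just above the statement, namely the minimum over $j$-dimensional subspaces $U\in\mathcal{U}_j$ of the maximal Rayleigh quotient on $U\setminus\{0\}$.

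First, consider the case $1<n_1$. Fix an arbitrary $U\in\mathcal{U}_j$ and an arbitrary $u\in U\setminus\{0\}$; normalizing so that $\|\grad u\|_{L^2(D)}=1$, the inequalities $n_1\le n_2$ and $\eta_1\le\eta_2$ give, exactly as in the proof of Theorem \ref{mono},
\begin{align*}
\frac{\mathcal{A}_k(u,u)\big|_{n=n_2,\eta=\eta_2}}{\mathcal{B}(u,u)}
\;\le\; \frac{\mathcal{A}_k(u,u)\big|_{n=n_1,\eta=\eta_1}}{\mathcal{B}(u,u)}
\end{align*}
for every $k$, since $\tfrac{1}{n_2-1}\le\tfrac{1}{n_1-1}$ and $\tfrac{1}{\eta_2}\le\tfrac{1}{\eta_1}$ pointwise (here $\mathcal{B}$ does not depend on the parameters). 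Taking the maximum over $u\in U\setminus\{0\}$ and then the minimum over $U\in\mathcal{U}_j$ yields $\lambda_j(k;n_2,\eta_2)\le\lambda_j(k;n_1,\eta_1)$ for all $k\in(0,\infty)$. Now evaluate at $k=k_1:=k_j(n_1,\eta_1)$: by definition $\lambda_j(k_1;n_1,\eta_1)-k_1^2=0$, so $\lambda_j(k_1;n_2,\eta_2)-k_1^2\le 0$. On the other hand, Theorem \ref{positive} gives $\mathcal{A}_k(u,u)-k^2\mathcal{B}(u,u)\ge\delta\|\Delta u\|^2_{L^2(D)}>0$ for all $u\in\tilde{H}^2_0(D)$ and all $k$ sufficiently small, hence every $\lambda_j(k;n_2,\eta_2)-k^2>0$ for small $k$. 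By continuity of $k\mapsto\lambda_j(k;n_2,\eta_2)-k^2$ the function changes sign on $(0,k_1]$, so it has a root there; since $k_j(n_2,\eta_2)$ is (by the convention $\lambda_j(k)-k^2=0$ defining $k_j$) at most this root, we get $k_j(n_2,\eta_2)\le k_1=k_j(n_1,\eta_1)$.

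For the case $n_2<1$ the same scheme applies with $\widetilde{\mathcal{A}}_k,\widetilde{\mathcal{B}}$ in place of $\mathcal{A}_k,\mathcal{B}$: under $n_1\le n_2$ one has $\tfrac{n_1}{1-n_1}\le\tfrac{n_2}{1-n_2}$, so $\widetilde{\mathcal{A}}_k(u,u)|_{n=n_1}\le\widetilde{\mathcal{A}}_k(u,u)|_{n=n_2}$, while $\eta_1\le\eta_2$ gives $\widetilde{\mathcal{B}}(u,u)|_{\eta=\eta_2}\le\widetilde{\mathcal{B}}(u,u)|_{\eta=\eta_1}$; combining these two, the Rayleigh quotient for $(n_1,\eta_1)$ is bounded above by that for $(n_2,\eta_2)$ for every fixed subspace and every $u$, whence $\lambda_j(k;n_1,\eta_1)\le\lambda_j(k;n_2,\eta_2)$ for all $k$. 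Evaluating at $k_2:=k_j(n_2,\eta_2)$ gives $\lambda_j(k_2;n_1,\eta_1)-k_2^2\le 0$, and Theorem \ref{positive} again supplies positivity of $\widetilde{\mathcal{A}}_k-k^2\widetilde{\mathcal{B}}$ for small $k$; the continuity/sign-change argument then delivers $k_j(n_1,\eta_1)\le k_2$.

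The only real point requiring care — and the step I would flag as the main obstacle — is the bookkeeping around which root of $\lambda_j(k;\cdot)-k^2$ the symbol $k_j$ denotes: Theorem \ref{mono} exploited that $k_1$ is the \emph{smallest} root, and one must make sure the analogous statement used here (that $k_j$ is, say, the smallest root of $\lambda_j(k)-k^2=0$, consistent with how $k_j$ was introduced in the paragraph preceding the corollary) is the right convention for the sign-chasing to close. Modulo fixing that convention, the proof is a direct transcription of the Theorem \ref{mono} argument with $\lambda_1$ replaced by $\lambda_j$ and the min over $\tilde{H}^2_0(D)$ replaced by the min-max over $\mathcal{U}_j$, and I would write it as such, noting "the argument is identical to that of Theorem \ref{mono}" for the continuity and sign-change portions rather than repeating them.
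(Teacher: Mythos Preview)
Your proposal is correct and is precisely the approach the paper takes: the paper gives no separate proof for this corollary, merely remarking that ``it is clear from the proof of Theorem \ref{mono}'' once one uses the min-max characterization of $\lambda_j$ over $j$-dimensional subspaces, which is exactly what you spell out. Your flagged bookkeeping issue about which root of $\lambda_j(k)-k^2=0$ the symbol $k_j$ denotes is a fair caveat, and the paper is equally silent on it; taking $k_j$ as the smallest such root (parallel to the definition of $k_1$) makes your sign-chasing close.
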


\section{Numerical results}
In this section, we present some numerical results for three obstacles in three dimensions to validate the theoretical results of the previous sections. The obstacles under consideration are a unit sphere centered at the origin, a peanut-shaped object, and a cushion-shaped object. The obstacles are shown in Figure \ref{obstacles}. Their parametrization in spherical coordinates is described in \cite[Section 6]{kleefeldITP} and given later for the sake of completeness.
\begin{figure}[htb] 
\centering
		\subfigure{ 
		\includegraphics[height=3.7cm]{./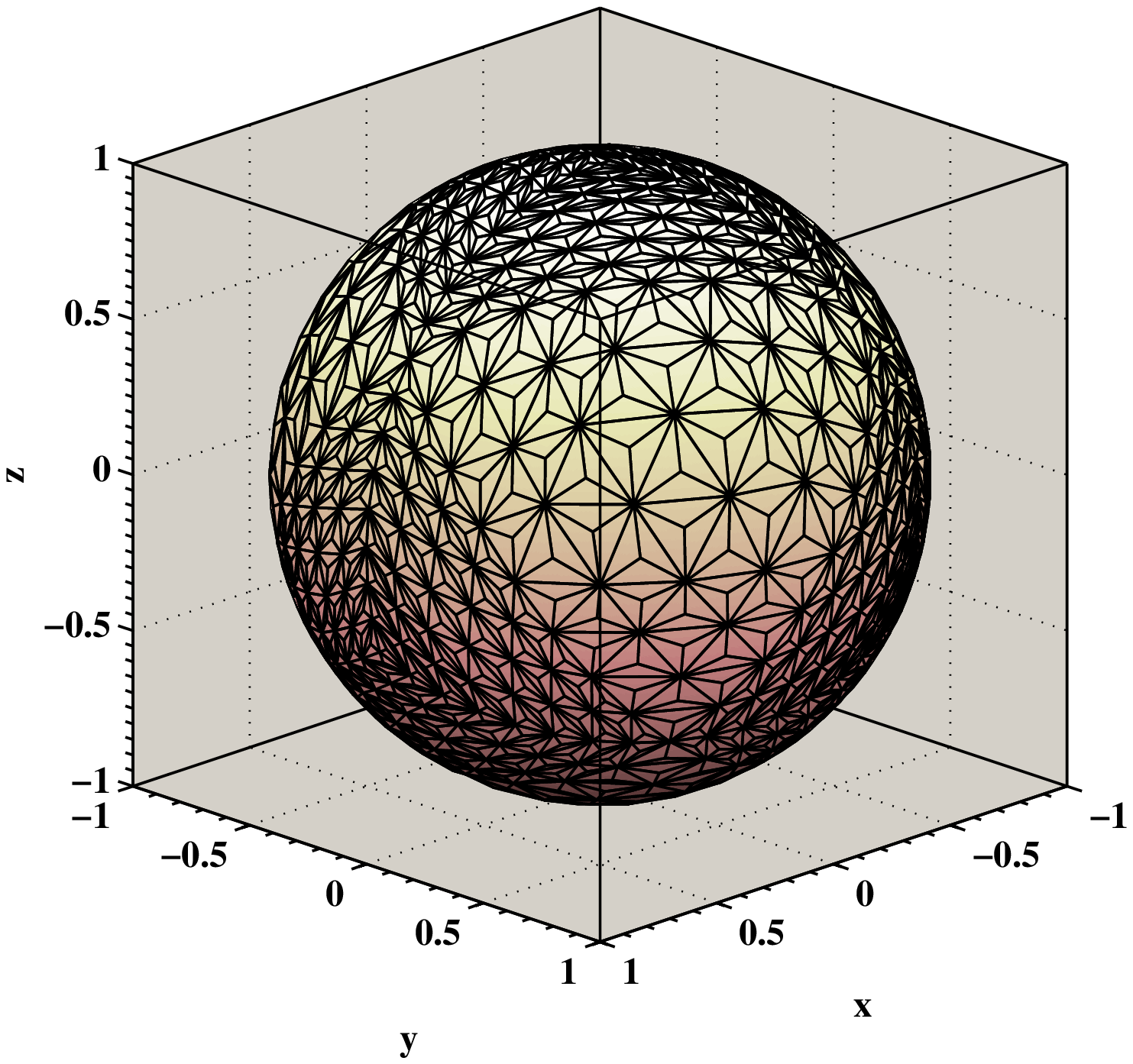}
}
\subfigure{ 
\includegraphics[height=3.7cm]{./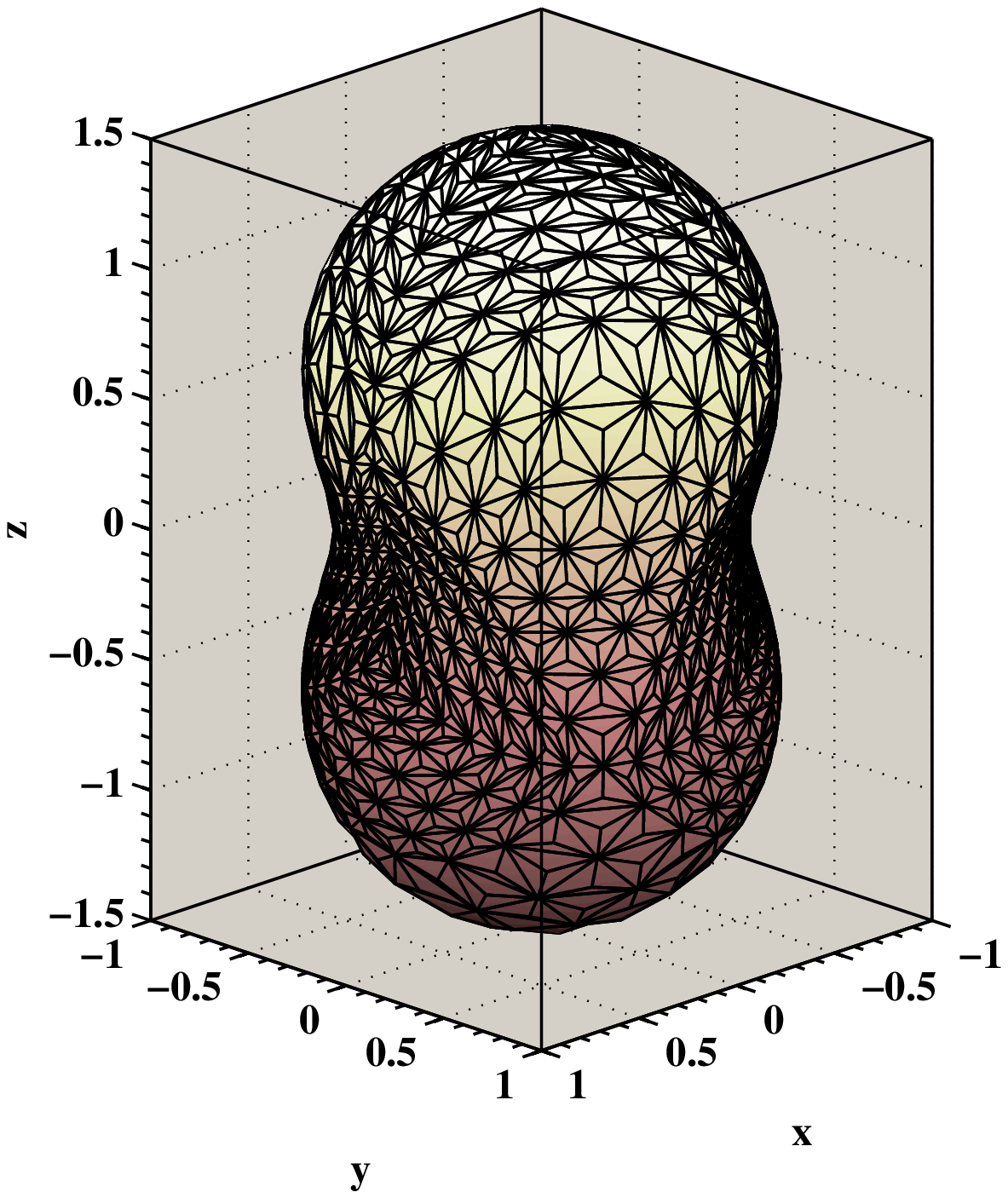}
}
\subfigure{ 
\includegraphics[height=3.7cm]{./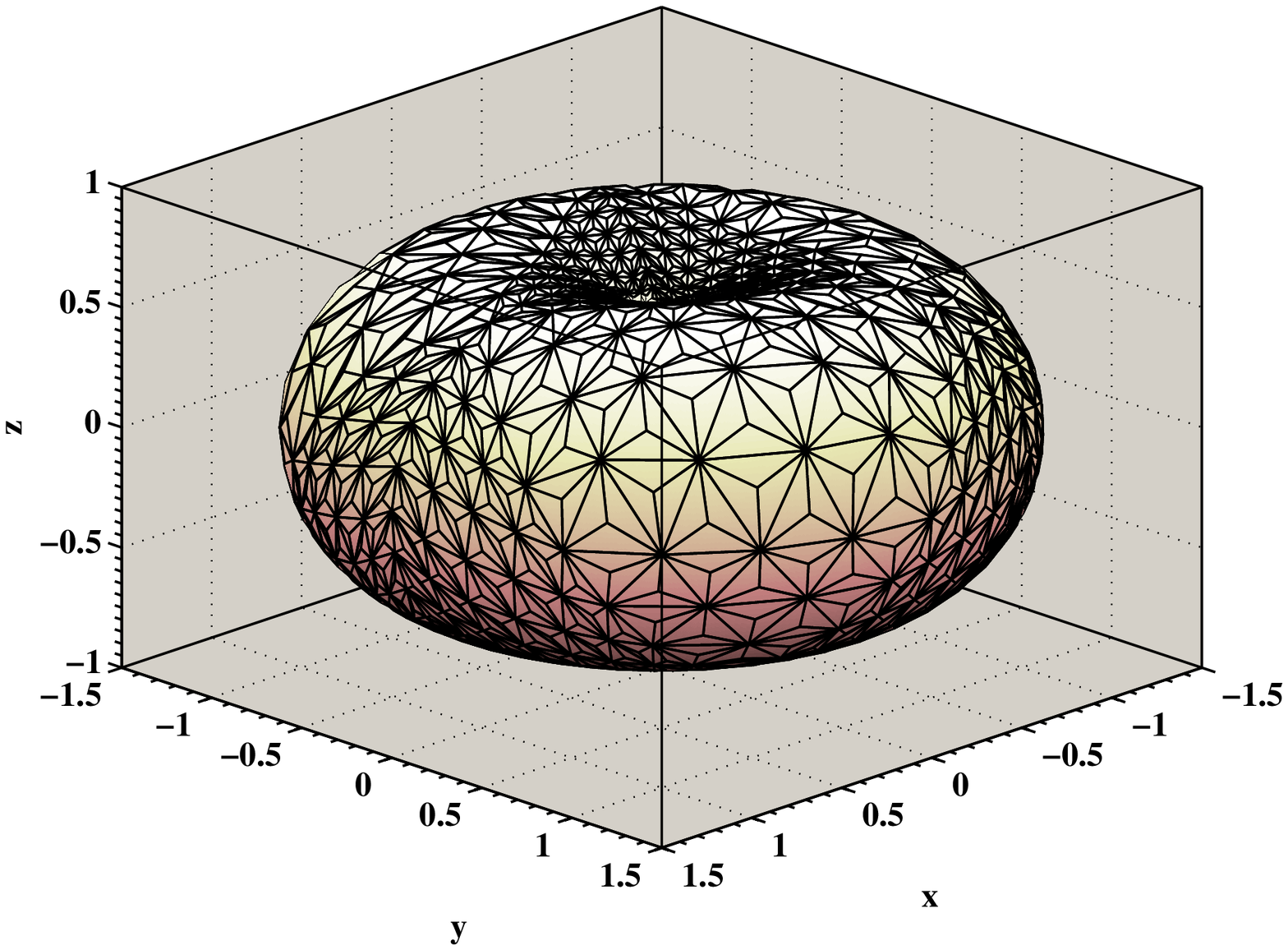}
}
		\caption{\label{obstacles}Left to right: Unit sphere centered at the origin, peanut-shaped obstacle, and cushion-shaped obstacle.}
		\end{figure}
		
First, we numerically calculate the interior transmission eigenvalues for a sphere of radius $R>0$ centered at the origin. 
This is achieved with a series expansion.
Then, 
we solve the problem at hand via a system of boundary integral equations and approximate it numerically with a boundary element collocation method. The resulting nonlinear eigenvalue problem is numerically solved with complex-valued contour integrals to obtain the interior transmission eigenvalues for a variety of surfaces for various parameter settings.
\subsection{Interior transmission eigenvalues for a sphere}
First, we calculate the interior transmission eigenvalues of a sphere with radius $R>0$ centered at the origin.
The solution to the interior transmission problem (\ref{teprob1})--(\ref{teprob2}) can be written as
\begin{eqnarray*}
v(r\hat{x})&=&\sum_{p=0}^\infty \sum_{m=-p}^{p} \alpha_p^m j_p(kr)Y_p^m(\hat{x})\,,\quad r<R\,,\\
w(r\hat{x})&=&\sum_{p=0}^\infty \sum_{m=-p}^{p} \beta_p^m j_p(k\sqrt{n}r)Y_p^m(\hat{x})\,,\quad r<R\,,
\end{eqnarray*}
where $x=r\hat{x}$ with $r>0$ and $\hat{x}\in \mathbb{S}=\{x\in\mathbb{R}^3:|x|=1\}$. Here, $j_p$ denotes the spherical Bessel function of the first kind of order $p$ and $Y_p^m$ is the spherical wave function.  
Using the boundary condition $w-v=0$ on the sphere of radius $R$ yields
\begin{equation}
\beta_p^m j_p(k\sqrt{n}R)-\alpha_p^m j_p(kR)=0\,.
\label{first}
\end{equation}
Likewise, using the boundary condition $\partial_r w-\partial_r v-\eta v=0$ on the sphere of radius $R$ gives
\begin{equation}
\beta_p^m k\sqrt{n}j_p'(k\sqrt{n}R)-\alpha_p^m k j_p'(kR)-\alpha_p^m \eta j_p(kR)=0\,.
\label{second}
\end{equation}
Equations (\ref{first}) and (\ref{second}) can be written as
\begin{eqnarray*}
\left(
\begin{array}{lr}
	-j_p(kR) \,\,\,& \quad j_p(k\sqrt{n}R)\\
	-k j_p'(kR)- \eta j_p(kR)\,\,\, &\quad k\sqrt{n}j_p'(k\sqrt{n}R)
\end{array}
\right)\left(
\begin{array}{cc}
	\alpha_p^m\\
	\beta_p^m
\end{array}
\right)=\left(\begin{array}{cc}
	0\\
	0
\end{array}\right).
\end{eqnarray*}
With the definition $M_p(k)$ for the matrix, we have to numerically calculate the zeros of
\begin{eqnarray}
\det M_p(k)
\label{inspect}
\end{eqnarray}
for $p\geq 0$
to find the interior transmission eigenvalues. The parameters $n$ and $\eta$ are given. In Table \ref{table1}, we list the first six interior transmission eigenvalues for a unit sphere using the index of refraction $n=4$ and various choices of $\eta$.
\begin{table}[!ht]
\centering
\begin{tabular}{l|ccccccccc}
     $\eta$     & 1. & 2. & 3. & 4. & 5. & 6. \\
 \hline
 $0.01$ & 3.136\,675 & 3.140\,531 & 3.141\,593 & 3.691\,542 & 4.260\,901 & 4.831\,165\\
 $0.1 $ & 3.109\,444 & 3.130\,912 & 3.141\,593 & 3.683\,405 & 4.253\,868 & 4.824\,974\\
 $0.25$ & 3.059\,806 & 3.114\,638 & 3.141\,593 & 3.669\,807 & 4.242\,177 & 4.814\,701\\
 $0.5$  & 2.974\,096 & 3.086\,914 & 3.141\,593 & 3.647\,091 & 4.222\,806 & 4.797\,750\\
 $1$    & 2.798\,386 & 3.029\,807 & 3.141\,593 & 3.601\,813 & 4.184\,685 & 4.764\,588\\
 $2$    & 2.458\,714 & 2.914\,716 & 3.141\,593 & 3.514\,484 & 4.112\,257 & 4.701\,954\\
 $3$    & 2.204\,525 & 2.809\,294 & 3.141\,593 & 3.435\,429 & 4.046\,733 & 4.645\,150\\
 $10$   & 1.743\,402 & 2.467\,800 & 3.138\,749 & 3.141\,593 & 3.779\,199 & 4.399\,490\\
 $100$  & 1.586\,662 & 2.269\,209 & 2.910\,355 & 3.141\,593 & 3.528\,384 & 3.904\,038\\
 $1000$ & 1.572\,369 & 2.248\,952 & 2.884\,610 & 3.141\,593 & 3.497\,455 & 3.866\,514\\
 $10000$& 1.570\,953 & 2.246\,929 & 2.882\,018 & 3.141\,593 & 3.494\,315 & 3.863\,012\\
 \hline
\end{tabular}
\caption{\label{table1}The first six interior transmission eigenvalues for a unit sphere using the index of refraction $n=4$ and various choices of $\eta$.}
\end{table}

As we can see in Table \ref{table1}, we obtain for the limiting case $\eta=0$ the `classic' interior transmission eigenvalues $3.141593$, $3.692445$, $4.261683$, and $4.831855$ (see for example \cite[Table 12]{kleefeldITP}). Interestingly, the first three interior transmission eigenvalues converge to $3.141593$ as $\eta\rightarrow 0$. One can also observe that there is a crossover of the third and fourth interior eigenvalue between $\eta=3$ and $\eta=10$. The limiting case for $\eta=\infty$ gives the union of the interior Dirichlet eigenvalues for a unit sphere and a sphere of radius two which can easily seen by considering the limiting case in (\ref{inspect}). The values are given by the zeros of $j_p(k)$ and $j_p(2k)$, respectively. The first four interior Dirichlet eigenvalues for a unit sphere are $3.141593$, $4.493408$, $5.236630$, and $5.763441$ (see also \cite[Table 11]{kleefeldITP}). The first five interior Dirichlet eigenvalues for a sphere of radius two are $1.570796$, $2.246705$, $2.881730$, $3.493966$, and $3.862626$.  
 
Next, we fix $\eta=0,1$ and show the change of the second interior transmission eigenvalue for various $n$. As we can see in Table \ref{table2}, we have a large shift of the second interior transmission eigenvalue to the right for both choices of $\eta$, if $n$ is decreases close to one.
\begin{table}[!ht]
\centering
\begin{tabular}{l|rr}
      $n$    & $\eta=0$ & $\eta=1$ \\
 \hline
 4   & 3.141\,593 & 3.029\,807\\
 3.5 & 3.457\,508 & 3.330\,436\\
 3   & 4.101\,812 & 3.933\,890\\
 2.5 & 5.744\,627 & 5.632\,859\\
 2.3 & 6.162\,456 & 6.036\,482\\
 2.1 & 6.734\,597 & 6.586\,329\\
 2.0 & 7.358\,550 & 7.164\,547\\
 1.9 & 8.745\,665 & 8.628\,245\\
 1.8 & 9.294\,075 & 9.160\,258\\
 1.7& 10.000\,772 & 9.841\,243\\
 \hline
\end{tabular}
\caption{\label{table2}The second interior transmission eigenvalues for a unit sphere using various index of refractions $n$ and $\eta=0$ and $\eta=1$.}
\end{table}
Lastly, we numerically show that the estimated order of convergence for a variety of interior transmission eigenvalues as $\eta$ goes to zero seems to be linear. Therefore, we define the absolute error $\epsilon_\eta^{(i)}=|k_0^{(i)}-k_\eta^{(i)}|$ for the $i$-th interior transmission eigenvalue. The estimated order of convergence is given by $\mathrm{EOC}^{(i)}=\log(\epsilon_\eta^{(i)}/\epsilon_{\eta/2}^{(i)})/\log(2)$. In Table \ref{table3} we list the absolute error and the estimated order of convergence for the second, fourth, and sixth interior transmission eigenvalue for a unit sphere using $n=4$.
\begin{table}[!ht]
\centering
\begin{tabular}{c|cc|cc|cc}
      $\eta$    & $\epsilon_\eta^{(2)}$ & $\mathrm{EOC}^{(2)}$ & $\epsilon_\eta^{(4)}$ & $\mathrm{EOC}^{(4)}$ & $\epsilon_\eta^{(6)}$ & $\mathrm{EOC}^{(6)}$\\
 \hline
 1    &$1.118\cdotp 10^{-1}$ &       &$9.063\cdotp 10^{-2}$ &      &$6.727\cdotp 10^{-2}$ &      \\
 1/2  &$5.468\cdotp 10^{-2}$ & 1.032 &$4.535\cdotp 10^{-2}$ & 0.999&$3.411\cdotp 10^{-2}$ & 0.980\\
 1/4  &$2.695\cdotp 10^{-2}$ & 1.021 &$2.264\cdotp 10^{-2}$ & 1.002&$1.716\cdotp 10^{-2}$ & 0.991\\
 1/8  &$1.337\cdotp 10^{-2}$ & 1.011 &$1.130\cdotp 10^{-2}$ & 1.003&$8.601\cdotp 10^{-3}$ & 0.996\\
 1/16 &$6.659\cdotp 10^{-3}$ & 1.006 &$5.647\cdotp 10^{-3}$ & 1.001&$4.306\cdotp 10^{-3}$ & 0.998\\
 1/32 &$3.323\cdotp 10^{-3}$ & 1.003 &$2.822\cdotp 10^{-3}$ & 1.001&$2.154\cdotp 10^{-3}$ & 0.999\\
 1/64 &$1.660\cdotp 10^{-3}$ & 1.001 &$1.411\cdotp 10^{-3}$ & 1.000&$1.078\cdotp 10^{-3}$ & 0.999\\
 1/128&$8.294\cdotp 10^{-4}$ & 1.001 &$7.050\cdotp 10^{-4}$ & 1.001&$5.388\cdotp 10^{-4}$ & 1.001\\
 1/256&$4.146\cdotp 10^{-4}$ & 1.000 &$3.523\cdotp 10^{-4}$ & 1.001&$2.694\cdotp 10^{-4}$ & 1.000\\
 \hline
\end{tabular}
\caption{\label{table3}The estimated order of convergence for the second, fourth, and sixth interior transmission eigenvalue for a unit sphere using $n=4$ as $\eta\rightarrow 0$.}
\end{table}

\subsection{Interior transmission eigenvalues for arbitrary obstacles}
First, we will derive the system of boundary integral equations to solve the interior transmission problem (\ref{teprob1})--(\ref{teprob2}) which is an easy extension of Cossonni\`{e}re and Haddar (see \cite{cossohaddar}). Later, we will approximate this system of boundary integral equations to numerically compute the interior transmission eigenvalues. 
\subsubsection{A system of boundary integral equations}
We apply Green's representation theorem in $D$ to obtain (see \cite[Theorem 2.1]{coltonkress})
\begin{eqnarray}
w(x)&=&SL_{k\sqrt{n}}(\partial_{\nu}w)(x)-DL_{k\sqrt{n}} (w)(x)\,,\qquad x\in D\,,\label{start2}\\
v(x)&=&SL_{k}(\partial_{\nu}v)(x)-DL_{k} (v)(x)\,,\qquad x\in D\,,\label{start1}
\end{eqnarray}  
where 
\begin{eqnarray*}
SL_k(f)(x)&=&\int_{\partial D}\Phi_k(x,y)f(y)\;\mathrm{d}s(y)\,,\qquad x\in D\,,\\
DL_k(f)(x)&=&\int_{\partial D}\partial_{\nu(y)}\Phi_k(x,y)f(y)\;\mathrm{d}s(y)\,,\qquad x\in D\,,
\end{eqnarray*}
are the single and double layer potentials that are defined for points in the domain $D$, respectively. 
Here, the function $\Phi_k(x,y)$ is the fundamental solution of the Helmholtz equation depending on the wave number $k$.

Now, we let the point $x\in D$ approach the boundary $\partial D$ and then we use the jump relations of the single and double layer potentials (see for example \cite[Theorem 3.1]{coltonkress}) to obtain
\begin{eqnarray}
w(x)&=&S_{k\sqrt{n}}(\partial_{\nu}w)(x)-D_{k\sqrt{n}} (w)(x)+\frac{1}{2}w(x)\,,\qquad x\in \partial D\,,\label{one}\\
v(x)&=&S_{k}(\partial_{\nu}v)(x)-D_{k} (v)(x)+\frac{1}{2}v(x)\,,\qquad x\in \partial D\,,\label{two}
\end{eqnarray}
where 
\begin{eqnarray*}
S_k(f)(x)&=&\int_{\partial D}\Phi_k(x,y)f(y)\;\mathrm{d}s(y)\,,\qquad x\in \partial D\,,\\
D_k(f)(x)&=&\int_{\partial D}\partial_{\nu(y)}\Phi_k(x,y)f(y)\;\mathrm{d}s(y)\,,\qquad x\in \partial D\,,
\end{eqnarray*}
are the single and double layer boundary integral operators, respectively. 

Next, we apply the boundary condition, then equation (\ref{one}) can be written as
\begin{eqnarray}
v(x)=S_{k\sqrt{n}}(\partial_{\nu}v)(x)+\eta S_{k\sqrt{n}}(v)(x) -D_{k\sqrt{n}} (v)(x)+\frac{1}{2}v(x)\,,\qquad x\in \partial D\,\label{twonew}.
\end{eqnarray}
Finally, we take the difference of (\ref{twonew}) and (\ref{two}) and obtain
\begin{eqnarray}
0=\left(S_{k\sqrt{n}}-S_{k}\right)(\partial_{\nu}v)(x)+\left(-D_{k\sqrt{n}}+D_{k}+\eta S_{k\sqrt{n}}\right)(v)(x)\,,\qquad x\in\partial D\,.\label{important1a}
\end{eqnarray}
Now, we take the normal derivative of equations (\ref{start2}) and (\ref{start1}), then we let the point $x\in D$ approach the boundary $\partial D$, and use the jump relations of the normal derivative of the single and double layer potentials (see for example \cite[Theorem 3.1]{coltonkress}) which gives
\begin{eqnarray}
\partial_{\nu} w(x)&=&K_{k\sqrt{n}}(\partial_{\nu}w)(x)+\frac{1}{2}\partial_{\nu}w(x)-T_{k\sqrt{n}} (w)(x)\,,\qquad x\in \partial D\,,\label{start2nj}\\
\partial_{\nu} v(x)&=&K_{k}(\partial_{\nu}v)(x)+\frac{1}{2}\partial_{\nu}v(x)-T_{k} (v)(x)\,,\qquad x\in \partial D\,,\label{start1nj}
\end{eqnarray}
where 
\begin{eqnarray*}
K_k(f)(x)&=&\int_{\partial D}\partial_{\nu(x)}\Phi_k(x,y)f(y)\;\mathrm{d}s(y)\,,\qquad x\in \partial D\,,\\
T_k(f)(x)&=&\partial_{\nu(x)}\int_{\partial D}\partial_{\nu(y)}\Phi_k(x,y)f(y)\;\mathrm{d}s(y)\,,\qquad x\in \partial D\,,
\end{eqnarray*}
are the normal derivative of the single and double layer boundary integral operators, respectively. 
Next, we apply the boundary conditions, then equation (\ref{start2nj}) can be written as
\begin{eqnarray}
&&\partial_{\nu} v(x)+\eta v(x)\label{start2njnew}\\
&&\hspace{0.2in}= K_{k\sqrt{n}}(\partial_{\nu}v)(x)+\eta K_{k\sqrt{n}}(v)(x) +\frac{1}{2}\partial_{\nu}v(x)+\frac{1}{2}\eta v(x)-T_{k\sqrt{n}} (v)(x)\,,\quad x\in \partial D\,.\notag
\end{eqnarray}
Finally, we take the difference of (\ref{start2njnew}) and (\ref{start1nj}) and obtain
\begin{eqnarray}
0=\left(K_{k\sqrt{n}}-K_{k}\right)(\partial_{\nu}v)(x)+\left(-T_{k\sqrt{n}}+T_{k}+\eta K_{k\sqrt{n}}-\frac{1}{2}\eta I\right)(v)(x)\,,\label{important2a}
\end{eqnarray}
where $x\in\partial D$.
Using the notation $\alpha=\partial_{\nu}v(x)$ and $\beta=v(x)$, $x\in\partial D$, we can write (\ref{important1a}) and (\ref{important2a}) as
\begin{eqnarray}
\left(
\begin{array}{cc}
S_{k\sqrt{n}}-S_{k} & \quad \quad -D_{k\sqrt{n}}+D_{k}+\eta S_{k\sqrt{n}}\\
K_{k\sqrt{n}}-K_{k} & \quad \quad -T_{k\sqrt{n}}+T_{k}+\eta \left(K_{k\sqrt{n}}-\frac{1}{2}I\right)
\end{array}
\right)
\left(
\begin{array}{c}
\alpha\\
\beta
\end{array}
\right)=
\left(
\begin{array}{c}
0\\
0
\end{array}
\right).
\label{equation}
\end{eqnarray}
The boundary integral operator $Z$ depending on the wave number $k$ is given by
\begin{eqnarray}
Z(k)=
\left(
\begin{array}{cc}
S_{k\sqrt{n}}-S_{k} & \quad \quad -D_{k\sqrt{n}}+D_{k}+\eta S_{k\sqrt{n}}\\
K_{k\sqrt{n}}-K_{k} & \quad \quad -T_{k\sqrt{n}}+T_{k}+\eta \left(K_{k\sqrt{n}}-\frac{1}{2}I\right)
\end{array}
\right)
\label{matrix}
\end{eqnarray}
and hence (\ref{equation}) can be written abstractly as
\begin{eqnarray}
Z(k)X=0
\label{equation2}
\end{eqnarray}
with the obvious definition of $X$. 

{Note that the boundary integral operator $Z(k):H^{-3/2}(\partial D)\times H^{1/2}(\partial D)\mapsto$ $H^{1/2}(\partial D)\times H^{-1/2}(\partial D)$ for $\eta>0$ is of Fredholm type with index zero, and it is analytic on the upper half-plane of $\mathbb{C}$. Indeed, notice that $Z(k)$ can be written as $Z(k) = C(k)+T(k)$ with 
\begin{eqnarray*}
T(k)=
\left(
\begin{array}{cc}
-\gamma(k,k\sqrt{n}) \left(S_{i |k\sqrt{n}|}-S_{i|k|} \right) & \quad \quad \eta S_{i |k\sqrt{n}|} \\
-\gamma(k,k\sqrt{n}) \left(K_{i |k\sqrt{n}|}-K_{i|k|} \right)& \quad \quad \eta \left(K_{i |k\sqrt{n}|}-\frac{1}{2}I\right)
\end{array}
\right)
\end{eqnarray*}
and 
\begin{eqnarray*}
&&C(k) \\
&=&\left(
\begin{array}{cc}
S_{k\sqrt{n}}-S_{k}+\gamma(k,k\sqrt{n}) \left(S_{i |k\sqrt{n}|}-S_{i|k|} \right) & \quad  -D_{k\sqrt{n}}+D_{k}+\eta \left( S_{k\sqrt{n}}- S_{i |k\sqrt{n}|} \right) \\
K_{k\sqrt{n}}-K_{k}-\gamma(k,k\sqrt{n}) \left(K_{i |k\sqrt{n}|}-K_{i|k|} \right)& \quad  -T_{k\sqrt{n}}+T_{k}+\eta \left( K_{k\sqrt{n}} - K_{i |k\sqrt{n}|} \right)
\end{array}
\right)
\end{eqnarray*}
where the constant ${ \gamma(a,b)=\frac{a^2-b^2}{|a|^2-|b|^2}}$. Therefore, the compactness of the operator $C(k):H^{-3/2}(\partial D)\times H^{1/2}(\partial D)\mapsto$ $H^{1/2}(\partial D)\times H^{-1/2}(\partial D)$ is given by \cite[Lemma 5.3.9]{cossonniere} and \cite[Corollary 5.1.4]{cossonniere}. Following the analysis in \cite[Lemma 5.3.6]{cossonniere} and \cite[Lemma 5.3.8]{cossonniere} one can show that $T(k):H^{-3/2}(\partial D)\times H^{1/2}(\partial D)\mapsto$ $H^{1/2}(\partial D)\times H^{-1/2}(\partial D)$ is a coercive operator. }Thus, the theory of eigenvalue problems for holomorphic Fredholm operator-valued functions applies to $Z(k)$.

\subsubsection{The numerical approximation of the system of boundary integral equations}
In this subsection, we briefly explain how we turn the system of boundary integral equations $Z(k)X=0$ given by (\ref{equation2}) into an algebraic system $\mathbf{Z}(k)v=0$, where $\mathbf{Z}(k)$ is a dense matrix of size $m\times m$ depending on the wavenumber $k$.
We use the boundary element collocation solver developed in \cite{kleefeldphd}, since we can obtain highly accurate approximations 
with a moderate size of $m$ which is due to superconvergence. Note that this solver has already been applied in several applications dealing with the Helmholtz equation (see for example \cite{kirschkleefeld,kleefeld1,kleefeld2,kleefeld3,kleefeldhabil,kleefeldlin1,kleefeldlin2}). We will use the parameters $\alpha=0.1$ for quadratic interpolation, $N_S=128$, and $N_{NS}=4$ with $768$ collocation points (see \cite{kleefeldphd,kleefeldlin2}). Hence, we have $m=1536$.

The resulting nonlinear eigenvalue problem $\mathbf{Z}(k)v=0$ with matrix $\mathbf{Z}(k)\in \mathbb{C}^{m\times m}$ with large $m$ is solved by the recent invented algorithm by Beyn (see \cite{beyn}). It uses Keldysh's theorem to reduce the nonlinear eigenvalue problem to a linear eigenvalue problem of much smaller size via complex-valued contour integrals which are numerically approximated by the trapezoidal rule. Precisely, one chooses a contour in the complex plane (usually a $2\pi$-periodic functions such as an ellipse) and calculates all nonlinear eigenvalues $k$ including their multiplicity situated inside the contour.

\subsubsection{The numerical calculation of the interior transmission eigenvalues} 
Now, we are in the position to present numerical results for various obstacles and different parameter settings. First, we choose a peanut-shaped obstacle which is parametrically given by the spherical coordinates $x=\varrho\sin(\phi)\cos(\theta)$, $y=\varrho\sin(\phi)\sin(\theta)$, and $z=\varrho\cos(\phi)$ with azimuthal angle $\phi\in [0,\pi]$ and polar angle $\theta\in [0,2\pi]$. The positive $\varrho$ is given by the equation $\varrho^2=9\left\{\cos^2(\phi)+\sin^2(\phi)/4\right\}/4$. We consider the two different index of refractions $n=1/2$ and $n=4$, since we have to distinguish from the theoretical point of view the cases $n<1$ and $n>1$. We pick $\eta=1/2$, $\eta=1$, and $\eta=3$. Hence, we have a total of six different cases under consideration.
The results are listed in Table \ref{tablepeanut}.
\begin{table}[!ht]
\centering
\begin{tabular}{l|cccccccccc}
    $(n,\eta)$      & 1. & 2. & 3. & 4. & 5. & 6. & 7.\\
 \hline
 (1/2,1/2) & 1.481\,359 & 1.754\,289 & 2.080\,586 & 2.106\,238 & 2.245\,421 & 2.436\,428& 2.524\,843\\
 (1/2,1)   & 1.889\,608 & 2.245\,548 & 2.713\,844 & 2.727\,860 & 2.934\,707 & 3.188\,784& 3.326\,731\\
 (1/2,3)   & 2.482\,082 & 2.947\,498 & 3.640\,550 & 3.695\,166 & 3.997\,475 & 4.327\,057& 4.599\,158\\
 \hline
 (4,1/2)   & 2.754\,035 & 2.987\,131 & 3.460\,241 & 3.517\,669 & 3.583\,455 & 3.777\,528& 3.809\,614\\
 (4,1)     & 2.678\,956 & 2.930\,558 & 3.404\,815 & 3.456\,156 & 3.534\,554 & 3.729\,411& 3.774\,500\\
 (4,3)     & 2.391\,812 & 2.723\,728 & 3.196\,562 & 3.198\,664 & 3.291\,749 & 3.560\,054& 3.640\,825\\
 \hline
\end{tabular}
\caption{\label{tablepeanut}The interior transmission eigenvalues for a peanut-shaped obstacle using the index of refractions $n=1/2$ and $n=4$ for $\eta=1/2$, $\eta=1$, and $\eta=3$.}
\end{table}
As we can see, we are able to compute various interior transmission eigenvalues to high accuracy for a peanut-shaped obstacle. For comparison purpose we also list the first seven classic interior transmission eigenvalues using $n=4$. They are $2.825456$, $3.044765$, $3.515130$, $3.574902$, $3.627453$, $3.827094$, and $3.844736$ (see \cite[Table 4]{kleefeldITP}).
Next, we show for a fixed $n=4$ the first interior transmission eigenvalue for various choices of $\eta$ which illustrates the monotonicity. As we can see in Figure \ref{fig1}, the first interior transmission eigenvalue decreases as $\eta$ increases. The same is true for the other interior transmission eigenvalues.
\begin{figure}[!ht]
\centering
\epsfig{file=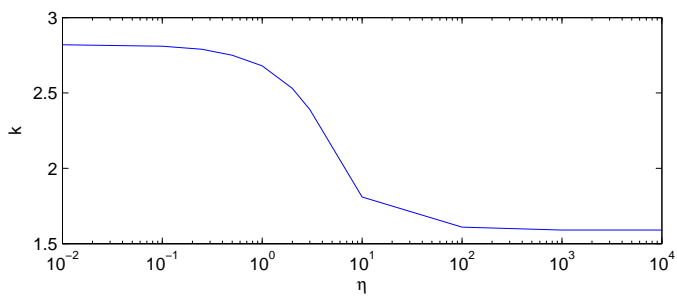,height=4cm}
\caption{\label{fig1}The monotonicity of the first interior transmission eigenvalues for the peanut-shaped obstacle using $n=4$ for increasing $\eta$.}
\end{figure} 
 
We also show the monotonicity of the first interior transmission eigenvalue for the peanut-shaped obstacle using $n=1/2$ for increasing $\eta$ in Figure \ref{fig1b}.

\begin{figure}[!ht]
\centering
\epsfig{file=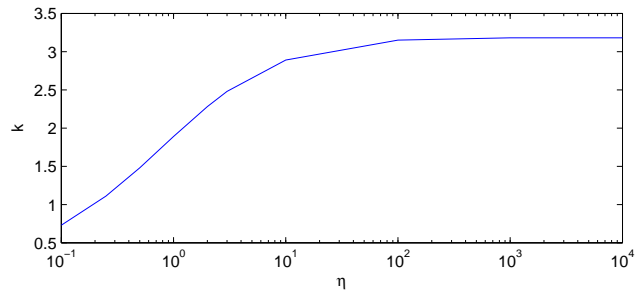,height=4cm}
\caption{\label{fig1b}The monotonicity of the first interior transmission eigenvalues for the peanut-shaped obstacle using $n=1/2$ for increasing $\eta$.}
\end{figure} 
Additionally, we compute the interior transmission eigenvalues for a cushion-shaped object that is given parametrically by spherical coordinates with $\varrho=1-\cos(2\phi)/2$. We again consider the same parameters as in the previous example. The results are shown in Table \ref{tablecushion}.
\begin{table}[!ht]
\centering
\begin{tabular}{l|cccccccccc}
  $(n,\eta)$        & 1. & 2. & 3. & 4. & 5. & 6. & 7.\\
 \hline
 (1/2,1/2) & 1.359\,283 & 1.694\,494 & 2.012\,440 & 2.087\,716 & 2.110\,396 & 2.271\,313& 2.298\,900\\
 (1/2,1)   & 1.730\,859 & 2.164\,577 & 2.595\,767 & 2.732\,528 & 2.979\,526 & 2.994\,836& 3.235\,653\\
 (1/2,3)   & 2.273\,696 & 2.834\,967 & 3.439\,393 & 3.651\,267 & 3.766\,782 & 4.031\,330& 4.100\,045\\
 \hline
 (4,1/2)   & 2.863\,595 & 2.878\,783 & 3.144\,915 & 3.159\,434 & 3.469\,001 & 3.814\,417& 3.828\,743\\
 (4,1)     & 2.762\,018 & 2.818\,074 & 3.087\,199 & 3.099\,157 & 3.431\,516 & 3.763\,499& 3.782\,309\\
 (4,3)     & 2.384\,383 & 2.611\,343 & 2.841\,059 & 2.945\,477 & 3.305\,505 & 3.508\,923& 3.583\,133\\
 \hline
\end{tabular}
\caption{\label{tablecushion}The interior transmission eigenvalues for a cushion-shaped obstacle using the index of refractions $n=1/2$ and $n=4$ for $\eta=1/2$, $\eta=1$, and $\eta=3$.}
\end{table}
As before, we are able to compute various accurate interior transmission eigenvalues for a cushion-shaped obstacle. The first seven classic interior transmission eigenvalues are $2.941084$, $2.962924$, $3.192652$, $3.234727$, $3.508462$, $3.848378$, and $3.892142$ (see \cite[Section 6.5]{kleefeldITP}). Note that we are also able to compute interior transmission eigenvalues for other obstacles as well, but it should be enough to provide them for a sphere, a peanut, and a cushion. 

\subsubsection{Computing interior transmission eigenvalues from far-field data}
In this section, we give some numerical examples to show that the real transmission eigenvalues corresponding to our problem can be determined from the far-field measurements following the approach in \cite{cchlsm} (also see \cite{cavityhcs}). To this end, we introduce the far-field equation. Let the radiating fundamental solution to Helmholtz equation in $\R^3$ be denoted by $\Phi_k(x,z)$, $x,z\in \R^3, x\neq z$ with $\Phi_{\infty}(\hat{x}, z)=\frac{1}{4\pi}\mathrm{e}^{\mathrm{i} k \hat{x} \cdot z}$, $z\in \R^3$ is fixed, denoting the far-field pattern in the direction $\hat{x} \in \mathbb{S}$. Let $u^s$ be the scattered field corresponding to the scattering problem \eqref{direct1}--\eqref{src} and therefore having the expansion (see for example \cite{p1})
\[
u^s(x,d)=\frac{\mathrm{e}^{\mathrm{i}k|x|}}{|x|} \left\{u_{\infty}(\hat{x}, d ) + \mathcal{O} \left( \frac{1}{|x|}\right) \right\}\; \textrm{  as  } \;  |x| \to \infty. 
\]
Now, define the corresponding far-field operator $\mathcal{F}: L^2(\mathbb{S}) \mapsto  L^2(\mathbb{S})$
$$(\mathcal{F}g)(\hat{x})=\int_{\mathbb{S}} u_\infty(\hat{x},d) g(d) \, \dif s(d).$$
We now define the far-field equation  
\begin{eqnarray}
&&\textrm{for a fixed  } z \in D \textrm{  and  } |\hat{x}|=1 \,,\, \, \textrm{ find } \, \,  g_z \in L^2(\mathbb{S}) \, \,\notag\\
&& \hspace{0.5in} \textrm{ such that } \, \,  (\mathcal{F}g_z)(\hat{x})=\Phi_{\infty}(\hat{x},z).
\label{farfieldeq}
\end{eqnarray}
Since $\mathcal{F}$ is compact we find the Tikhonov regularized solution, $g_{z,\delta}$ of the far-field equation defined as the unique minimizer of 
$$\| \mathcal{F} g-\Phi_{\infty}(\cdot, z)\|^2_{L^2(\mathbb{S})}+\epsilon\|g\|^2_{L^2(\mathbb{S})},$$
where the regularization parameter $\epsilon:=\epsilon(\delta)\to 0$ as the noise level $\delta\to 0$. The regularization parameter is chosen based on Morozov's discrepancy principle.
At a transmission eigenvalue we expect
$||{g_{z,\delta}}||_{L^2(\mathbb{S})} \rightarrow \infty$ as $\delta\to 0$. Therefore, the transmission eigenvalues should appear as spikes in the plot of $k \mapsto ||{g_{z}}||_{L^2(\mathbb{S})}$. Below we present an example of 
computing the first transmission eigenvalue from the far field measurements for a unit sphere.

We first have to compute the far-field pattern in order to approximate the far-field operator. The derivation of the far-field pattern for a sphere of radius $R$ 
is an easy task. Using the same ansatz as in \cite[Section 4.2]{anachakle}, we obtain
\begin{eqnarray*}
&&\left[
\begin{array}{rr}
	h_p^{(1)}(k R) & \quad  -j_p(k\sqrt{n} R)\\
	k h_p^{(1)'}(k R)+\eta h_p^{(1)}(k R)  & \quad  - k\sqrt{n} j_p'(k\sqrt{n} R)
\end{array}
\right]\left[
\begin{array}{c}
	\alpha_p^m\\
	\beta_p^m
\end{array}
\right]\\
&&\hspace{1.4in} =-4\pi \mathrm{i}^p \overline{Y_p^m({d})}\left[
\begin{array}{l}
	j_p(k R)\\
	j_p'(k R)k+\eta j_p(k R)
\end{array}
\right]\,,
\end{eqnarray*}
where $j_p$ denotes the spherical Bessel
function of the first kind of order $p$, $h_p^{(1)}$ the spherical Hankel
function of the first kind of order $p$, $P_p$ is the Legendre polynomial of
order $p$, and $Y_p^m$ is the spherical harmonic.
The solution $\alpha_p^m$ is given by
\begin{eqnarray*}
\alpha_p^m=-4\pi \mathrm{i}^p \overline{Y_p^m({d})} \frac{k\sqrt{n} j_p'(k\sqrt{n} R)j_p(k R)-j_p(k\sqrt{n} R)\left(k j_p'(k R)+\eta j_p(k R) \right)}{k\sqrt{n} j_p'(k\sqrt{n} R)h_p^{(1)}(k R)- j_p(k\sqrt{n} R)\left(k h_p^{(1)'}(k R)+\eta h_p^{(1)}(k R)\right)}\,.
\end{eqnarray*}
The far-field pattern of the scattering wave is given by (see \cite[Theorem 2.16]{coltonkress})
\begin{eqnarray*}
&&u^\infty(\hat{x}, {d})\\
&=&\frac{1}{k}\sum_{p=0}^\infty \frac{1}{\mathrm{i}^{p+1}}\sum_{m=-p}^m \alpha_p^m Y_p^m(\hat{x})\\
&=&\frac{\mathrm{i}}{k}\sum_{p=0}^\infty (2p+1) \frac{k\sqrt{n} j_p'(k\sqrt{n} R)j_p(k R)-j_p(k\sqrt{n} R)\left(k j_p'(k R)+\eta j_p(k R) \right)}{k\sqrt{n} j_p'(k\sqrt{n} R)h_p^{(1)}(k R)- j_p(k\sqrt{n} R)\left(k h_p^{(1)'}(k R)+\eta h_p^{(1)}(k R)\right)}
P_p(\hat{x} \cdotp {d})\,.
\end{eqnarray*}
In the last step we used the addition theorem for spherical harmonics. To derive the far-field pattern for an arbitrary obstacle, we use the ansatz of a combination of a double and single layer potential as in \cite[Section 4.1]{anachakle} to avoid complicated right-hand side functions. We then obtain the $2\times 2$ system of boundary integral equations
\begin{eqnarray*}
&&\Bigg(\left[
\begin{array}{cc}
	I      & \quad  0\\
	0      &\quad -I
\end{array}
\right]+\left[
\begin{array}{cc}
	D_{k}-D_{k\sqrt{n}} &\quad  S_{k}-S_{k\sqrt{n}}\\
	T_{k}-T_{k\sqrt{n}}+\eta\left(D_{k}+\frac{1}{2}I\right) &\quad  D_{k}-D_{k\sqrt{n}}+\eta S_{k}
\end{array}
\right]\Bigg)\left[
\begin{array}{c}
	\phi\\
	\psi
\end{array}
\right]\\
&&\hspace{1in}=\left[
\begin{array}{c}
 -u^{\text{inc}}\\
 -\partial_\nu u^{\text{inc}}-\eta u^{\text{inc}}
\end{array}
\right]\,,
\label{system}
\end{eqnarray*} 
where $\phi$ and $\psi$ are two unknown density functions which depend on the direction of incident ${d}$. The far-field pattern is given by
\begin{eqnarray*}
u^{\infty}(\hat{x},{d}) = 
\frac{1}{4\pi}\int_{\partial D}\left[\partial_{\nu(y)} \mathrm{e}^{-\mathrm{i}k \hat{x} \cdotp y} \phi(y;{d})+\mathrm{e}^{-\mathrm{i}k\hat{x} \cdotp y}\psi(y;{d})\right]\;\mathrm{d}s(y),
\qquad \hat{x} \in\mathbb{S}\,.
\end{eqnarray*}
Approximating the far-field equation (\ref{farfieldeq}) and using the origin as a sampling point gives us the possibility of detecting at least the first interior transmission eigenvalue for the unit sphere and the peanut-shaped object as shown in Figure \ref{figLSM}, where we used the parameters $n=4$ and $\eta=1$. The chosen grid is $[2.5,4.5]$ with grid size $0.01$.
\begin{figure}[H]
\centering
\subfigure[The unit sphere.]{
\epsfig{file=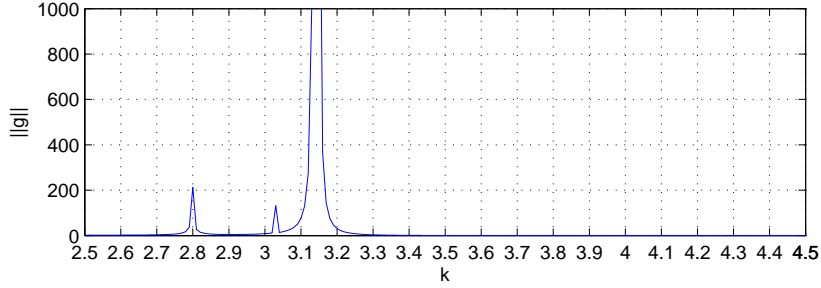,height=4cm}
}
\subfigure[The peanut-shaped obstacle.]{
\epsfig{file=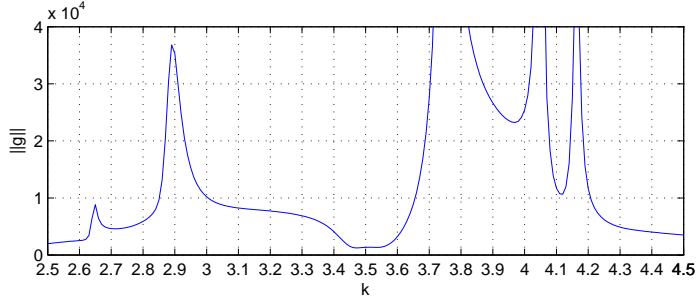,height=4cm}
}
\caption{\label{figLSM}The detection of the first interior transmission eigenvalue via the linear sampling method for the unit sphere and the peanut-shaped obstacle using $n=4$ and $\eta=1$.}
\end{figure}

As we can see, we are able to detect the first three interior transmission eigenvalues for the unit sphere. We obtain the values $2.80$, $3.03$, and $3.14$. The highly accurate values are given by $2.798\,386$, $3.029\,807$, and $3.141\,593$, respectively. Hence, we obtain accuracy within the chosen grid size. The situation slightly changes for the peanut-shaped obstacle. We are able to detect the first two interior transmission eigenvalues within a reasonable accuracy. We get $2.65$ and $2.89$, whereas the highly accurate values are $2.678\,956$ and $2.930\,558$, respectively. The values are accurate within two digits. Note that we are able to detect more interior transmission eigenvalues. Precisely, we obtain the values $3.75$, $4.05$, and $4.16$. The value $3.75$ is the seventh interior transmission eigenvalue as shown in Table \ref{tablepeanut}. Note that the theoretical validation of this approach would be future research.

\section{Summary}
In this article, we have extended the classical interior transmission problem to an interior transmission eigenvalue problem with the boundary condition of the form
\[
v = w \quad \text{ and } \quad \frac{\pa w}{\pa \nu} - \frac{\pa v}{\pa \nu} = \eta v \quad \text{ on } \pa D,
\]
for an inhomogeneous media. We proved existence and discreteness of the interior transmission eigenvalues and investigated the inverse spectral problem. Additionally, we proved monotonicity
with respect to the refractive index as well as the boundary conductivity parameter for the first transmission eigenvalues.
Further, we showed a uniqueness result for constant coefficients. All theoretical results are confirmed with numerical results. The possibility to calculate interior transmission eigenvalues from far-field data seems possible, but has to be analyzed from the theoretical point of view, which is left as a future research.

\section*{Acknowledgment}
Some of the results of this work were carried out during a research stay of Oleksandr Bondarenko at the University of Delaware in Summer 2015. He greatly acknowledges the hospitality of Fioralba Cakoni during the stay and the 
financial support from the Karlsruhe House of Young Scientists (KHYS). The authors would like to thank Fioralba Cakoni for valuable advice and the fruitful discussions.


\end{document}